\documentclass[12pt,reqno]{amsart}
\usepackage{geometry}                
\geometry{letterpaper}                   
\usepackage{amsmath,amssymb}
\usepackage{graphicx}
\usepackage{amssymb,latexsym, amsmath}
\usepackage{amsfonts,amsthm}
\usepackage{amscd}
\usepackage{mathrsfs}
\usepackage{hyperref}
\usepackage{eucal}
\usepackage{epstopdf}
\usepackage{amsgen}
\usepackage{xspace}
\usepackage{verbatim}
\usepackage{stmaryrd}
\DeclareGraphicsRule{.tif}{png}{.png}{`convert #1 `dirname #1`/`basename #1 .tif`.png}
\usepackage{enumitem}
\newlist{steps}{enumerate}{1}
\setlist[steps, 1]{label = Step \arabic*:}

\newcommand{\gd}{\Delta}

\newcommand{\inpt}[1]{\langle #1 \rangle}

\newcommand{\gw}{\Omega}
\newcommand{\ap}{\alpha}
\newcommand{\ga}{\gamma}
\newcommand{\de}{\delta}
\newcommand{\gb}{\beta}

\newcommand{\gl}{\lambda}

\newcommand{\ms}{\mathscr}

\newcommand{\nb}{\nabla}
\newcommand{\vp}{\varphi}
\newcommand{\ve}{\varepsilon}
\newcommand{\pdr}{\partial}

\newcommand{\beq}{\begin{equation}}
\newcommand{\eeq}{\end{equation}}
\newcommand{\bea}{\begin{align}}
\newcommand{\eea}{\end{align}}
\newcommand{\bthm}{\begin{theorem}}
\newcommand{\ethm}{\end{theorem}}
\newcommand{\bpr}{\begin{proof}}
\newcommand{\epr}{\end{proof}}
\newcommand{\bcl}{\begin{corollary}}
\newcommand{\ecl}{\end{corollary}}
\newcommand{\bpn}{\begin{proposition}}
\newcommand{\epn}{\end{proposition}}
\newcommand{\bre}{\begin{remark}}
\newcommand{\ere}{\end{remark}}
\newcommand{\bdf}{\begin{definition}}
\newcommand{\edf}{\end{definition}}
\newcommand{\bss}{\begin{align*}}
\newcommand{\ess}{\end{align*}}

\newcommand{\bl}{\label}

\newcommand{\mR}{\mathbb{R}}
\newtheorem{theorem}{Theorem}[section]
\newtheorem{corollary}[theorem]{Corollary}

\newtheorem{proposition}[theorem]{Proposition}

\theoremstyle{definition}
\newtheorem{definition}[theorem]{Definition}
\theoremstyle{remark}
\newtheorem{remark}{Remark}

\numberwithin{equation}{section}

%\title{Brief Article}
%\author{The Author}
%\date{}                                            

\begin{document}

\title[Memristive Hindmarsh-Rose Neural Networks]{Exponential Synchronization of Memristive HIndmarsh-Rose Neural Networks}

\author[Y. You]{Yuncheng You}%$^{\dag}$
\address{Professor Emeritus, University of South Florida, Tampa, FL 33620, USA}
\email{you@mail.usf.edu}
\thanks{}

%    General info
\subjclass[2010]{35B40, 35B41, 35K55, 37L30, 92C20}

\date{January 2, 2023}

%\dedicatory{This paper is dedicated to }

\keywords{Memristive Hindmarsh-Rose equations, dissipative dynamics, exponential synchronization, coupling strength, neural network.}

\begin{abstract} 
A new model of neural networks in terms of the memristive Hindmarsh-Rose equations is proposed. Globally dissipative dynamics is shown with absorbing sets in the state spaces. Through sharp and uniform grouping estimates and by leverage of integral and interpolation inequalities tackling the linear network coupling against the memristive nonlineariry, it is proved that exponential synchronization at a uniform convergence rate occurs when the coupling strengths satisfy the threshold conditions which are quantitatively expressed by the parameters.
\end{abstract}

\maketitle
 
\section{Introduction}

The diffusive Hindmarsh-Rose equations with memristors is a new model for single neuron dynamics proposed by this author very recently \cite{Y}. In this paper, we shall pursue the topic of synchronization of a neural network described by such a model of the memristive and diffusive Hindmarsh-Rose equations with linear couplings. 

Consider a network of $m$ fully coupled neuron cells denoted by $\mathcal{NW} = \{\mathcal{N}_i : i = 1, 2, \cdots, m\}$, where $m \geq 2$ is a positive integer. We shall study the dynamics and the synchronization problem of the following mathematical model of this neural network. Each neuron $\mathcal{N}_i$ in this network is described by the four differential equations
\beq \bl{cHR}
\begin{split}
	\frac{\pdr u_i}{\pdr t} & = \eta_1 \gd u_i +  au_i^2 - bu_i^3 + v_i - w_i + J_e - k_1 \vp (\rho_i) u_i + \sum_{j = 1}^m P (u_j - u_i),  \\
	\frac{\pdr v_i}{\pdr t} & =  \alpha - \beta u_i^2 - v_i,    \\[3pt]
	\frac{\pdr w_i}{\pdr t} & = q (u_i - u_e) - r w_i,   \\[3pt]
	\frac{\pdr \rho_i}{\pdr t} & = \eta_2 \gd \rho_i + u_i - k_2 \rho_i + \sum_{j = 1}^m Q (\rho_j - \rho_i), 
\end{split} 
\eeq
for $1 \leq i \leq m, \; t > 0,\; x \in \gw \subset \mathbb{R}^{n}$ ($n \leq 3$), where $\gw$ is a bounded domain with locally Lipschitz continuous boundary $\partial \gw$. In the membrane potential $u_i$-equations for neuron $\mathcal{N}_i$, the quadratic nonlinear form 
\beq \bl{vp}
	\vp (\rho_i) = c + \ga \rho_i + \delta \rho_i^2, \quad i = 1, 2, \cdots, m.
\eeq
presents the memristive effect, where $\rho_i (t, x)$ stands for the memductance of the memristor and $\vp(\rho_i)$ is the the electromagnetic induction flux with its strength coefficient $k_1$. In this system \eqref{cHR}, the variable $u_i(t,x)$ refers to the membrane electrical potential of a neuron cell, the variable $v_i(t, x)$ called the spiking variable represents the transport rate of the ions of sodium and potassium through the fast ion channels, and the variable $w_i(t, x)$ called the bursting variable represents the transport rate across the neuron membrane through slow channels of calcium and other ions. From biological and mathematical perspectives, the network neuron coupling terms are assumed to be linear in the membrane potential equations and the memristor equations with the coupling strength coefficient $P$ and $Q$ respectively.

All the parameters $a, b, \eta_1, \eta_2, \ap, \gb, q, r,  \de, k_1, k_2, P, Q$ and the external input $J_e$ can be any positive constants, while the reference membrane potential value $u_e$ and the two parameters $c, \ga$ of the memductance \eqref{vp} can be any real number constants. 

We impose the homogeneous Neumann boundary conditions
\begin{equation} \label{nbc}
	\frac{\pdr u_i}{\pdr \nu} (t, x) = 0, \quad \frac{\pdr \rho_i}{\pdr \nu} (t, x) = 0, \quad \text{for} \;\; t > 0,  \; x \in \partial \gw, \quad 1 \leq i \leq m.
\end{equation}
The initial states of the system \eqref{cHR} will be denoted by 
\begin{equation} \bl{inc}
	 u_i^0 (x) = u_i(0, x), \; v_i^0 (x) = v_i(0, x), \; w_i^0 (x) = w_i (0, x), \; \rho_i^0 = \rho_i (0, x), \;\; 1 \leq i \leq m.
\end{equation}

The original Hindmarsh-Rose neuron model \cite{HR} consists of three ordinary differential equations without memristors and it features characterization of the firing-bursting dynamics for neurons, which generates sophisticated Hopf bifurcations and semi-numerical analyses leading to many new solution patterns and collective synchronization behavior \cite{BRS, CS, ET, IG, EI, MFL, CPY, Tr, WS, Su}, especially chaotic bursting dynamics, in comparison with the classical highly nonlinear Hodgkin-Huxley equations \cite{HH} and the two-dimensional FitzHugh-Nagumo equations \cite{FH}.

Global dynamics and synchronization of ensemble neurons modeled by partly diffusive Hindmarsh-Rose equations have been studied by the author's group in recent years \cite{PYS, PY, CPY, PSY}. Such a model of hybrid PDE and ODE reflects the structural feature of neuron cells, which contain the short-branch dendrites receiving incoming signals and the long-branch axon propagating and transmitting outgoing signals through synapses. 

The concept of so-called memristor was coined by Leon Chua \cite{Chua} to denote the effect of electromagnetic flux on moving electric charges. General memristive systems \cite{ChuaK} attracted broad scientific interests in the recent one and half decades since the seminal publication \cite{SS}. Memristors are recognized in many advanced neuron models and applications \cite{Ay, BKG, BB, QW, US2, WP, Wu, XJ} as a different type (other than electrical and chemical) synapsis which can carry dynamically memorized signal information. 

The researches on memristive Hindmarsh-Rose neuron model of ODE have been richly expanding, cf. \cite{Ay, BB, BRS, EE, RJ, RM, SR, Wu} and many references therein. Results on synchronization with the memristive coupling in the ODE models are achieved \cite{Guan, HY, US2, VK, WS, XJ} mainly by the approaches of generalized Hamiltonian functions, Lyapunov exponents, with numerical simulations. However, it is not seen any reported synchronization results on the memristive-based diffusive neural networks modeled by hybrid differential equations. 

In this work we shall rigorously prove a sufficient threshold condition only on the network coupling strengths $P$ and $Q$ to ensure an exponential synchronization of this proposed neural network with a uniform convergence rate, through the approach of dissipative dynamics analysis and the sharp uniform estimates. The methodology in this work can be extended to study more complex neural networks in a broad scope.

\section{\textbf{Formulation and Preliminaries}}

Define two function spaces: 
$$
	E = [L^2 (\gw, \mathbb{R}^4)]^m \quad \text{and}  \quad  \Pi = [H^1 (\gw) \times L^2 (\gw, \mathbb{R}^2) \times H^1 (\gw)]^m
$$ 
which are Hilbert spaces and $H^1 (\gw)$ is a Sobolev space. Also define a Banach space 
$$
	\Psi = [L^2(\gw, \mathbb{R}^3) \times L^4 (\gw)]^m.
$$ 
The norm and inner-product of $L^2(\gw)$ will be denoted by $\| \, \cdot \, \|$ and $\inpt{\,\cdot , \cdot\,}$, respectively. Other space norms will be marked. We use $| \, \cdot \, |$ to denote a vector norm or a set measure in a Euclidean space $\mR^n$. We call $E$ the energy space and $\Psi$ the pivot space. 

The initial-boundary value problem \eqref{cHR}-\eqref{inc} can be formulated into an initial value problem of the evolutionary equation:
\begin{equation} \label{pb}
\begin{split}
	\frac{\partial g}{\partial t} = &\,A g + f (g) + F(g), \;\; t > 0, \\
	&g(0) = g^0 \in E.
\end{split}
\end{equation}
The unknown function in \eqref{pb} is a column vector $g(t) = \text{col}\; (g_1 (t), g_2 (t), \cdots, g_m (t))$, where
$$
	g_i (t) = \text{col}\, (u_i(t, \cdot),\, v_i (t, \cdot ),\, w_i(t, \cdot),\, \rho_i (t, \cdot)), \quad 1 \leq i \leq m,
$$ 
characterizes the dynamics of the neuron $\mathcal{N}_i$. The corresponding initial data function in \eqref{pb} is $g^0 = \text{col}\; (g_1^0, \,g_2^0, \cdots, g_m^0)$, where $g_i^0 = \text{col}\,(u_i^0, \,v_i^0, \,w_i^0, \,\rho_i^0), \, 1 \leq i \leq m$. For notational convenience, we shall use the quasi-norm in the Banach space $\Psi$:
\beq \bl{qnm}
	\interleave g_i(t) \interleave = \|u_i(t)\|^2 + \|v_i(t)\|^2 + \|w_i(t)\|^2 + \|\rho_i(t)\|^4_{L^4}, \;\, 1 \leq i \leq m,
\eeq 
for any 4-dimensional vector function $g_i(t)$ and accordingly $\interleave g(t) \interleave = \sum_{i = 1}^m \interleave g_i(t) \interleave$ for any $4m$-dimensional vector function $g(t) = \text{col}\, (g_1(t), \, g_2(t), \cdots, \,g_m(t))$. 

The closed linear operator $A$ associated with the evolutionary equation in \eqref{pb} and defined by
\begin{equation*} 
	A= \text{diag} \, (A_1, A_2, \cdots, A_m)
\end{equation*}
where
\begin{equation} \label{opA}
A_i =
\begin{pmatrix}
\eta_1 \gd \quad & 0  \quad & 0 & 0.  \\[3pt]
0 \quad & - I \quad  & 0 \quad & 0.  \\[3pt]
0 \quad & 0 \quad & - r I  \quad & 0  \\[3pt]
0 \quad & 0 \quad & 0 \quad  & \eta_2 \gd - k_2 I  
\end{pmatrix}
: \mathcal{D}(A) \rightarrow E, \quad i = 1, 2, \cdots, m,
\end{equation}
with the domain $\mathcal{D}(A) = \{g \in [H^2(\gw) \times L^2 (\gw, \mathbb{R}^2) \times H^2(\gw)]^m: \pdr u_i /\pdr \nu = \pdr \rho_i /\pdr \nu = 0, 1 \leq i \leq m\}$ and the identity operator $I$, is the generator of a $C_0$-semigroup $\{e^{At}\}_{t \geq 0}$ on the space $E$. Since the injection $H^{1}(\gw) \hookrightarrow L^6(\gw)$ is a continuous imbedding for space dimension $n \leq 3$ and by the H\"{o}lder inequality, the nonlinear mapping 
\begin{equation} \label{opf}
f(g) =
\begin{pmatrix}
au_1^2 - bu_1^3 + v_1 - w_1 + J_e - k_1 \vp (\rho_1) u_1 \\[4pt]
\alpha - \beta u_1^2  \\[4pt]
q (u_1 - u_e) \\[4pt]
u_1  \\[4pt]
\cdots \quad \cdots \quad \cdots \\[4pt]
au_m^2 - bu_m^3 + v_m - w_m + J_e - k_1 \vp(\rho_m) u_m  \\[4pt]
\alpha - \beta u_m^2  \\[4pt]
q (u_m - u_e). \\[4pt]
u_m  
\end{pmatrix}
: \Pi \longrightarrow E
\end{equation}
is a locally Lipschitz continuous mapping. The in-network coupling mapping is the vector function
\begin{equation} \label{opg}
F(g) =
\begin{pmatrix}
 \sum_{j = 1}^m P \,(u_j - u_1) \\[3pt]
0  \\[3pt]
0 \\[3pt]
\sum_{j = 1}^m Q (\rho_j - \rho_1)  \\[4pt]
\cdots \quad \cdots \quad \cdots \\[4pt]
 \sum_{j = 1}^m P \,(u_j - u_m) \\[3pt]
0  \\[3pt]
0  \\[3pt] 
\sum_{j = 1}^m Q (\rho_j - \rho_m)
\end{pmatrix}
: E \longrightarrow E .
\end{equation}
We shall consider the weak solutions, cf.\cite[Section XV.3]{CV} of this initial value problem of the evolutionary equation \eqref{pb}.

\begin{definition} \label{D:wksn}
	A $4m$-dimensional vector function $g(t, x)$, where $(t, x) \in [0, \tau] \times \gw$, is called a weak solution to the initial value problem of the evolutionary equation \eqref{pb}, if the following two conditions are satisfied: 
	
	\textup{(i)} $\frac{d}{dt} (g, \zeta) = (Ag, \zeta) + (f(g) + F(g), \zeta)$ is satisfied for a.e. $t \in [0, \tau]$ and any $\zeta \in E^*$;
		
	\textup{(ii)} $g(t, \cdot) \in  C([0, \tau]; E) \cap C^1 ((0, \tau); \Pi)$ and $g(0) = g^0$.
	
\noindent Here $(\cdot , \cdot)$ is the dual product of the dual space $E^*$ versus $E$.
\end{definition}

The following proposition can be proved by the Galerkin approximation method \cite{CV} and the regularity property \cite[Section 4.7]{SY} of the parabolic semigroup $e^{At}$.
\begin{proposition} \label{pps}
	For any given initial state $g^0 \in E$, there exists a unique weak solution $g(t; g^0), \, t \in [0, \tau]$, where $\tau > 0$ may depending on $g^0$, of the initial value problem \eqref{pb}. The weak solution $g(t; g^0)$ satisfies 
\begin{equation} \label{soln}
	g \in C([0, \tau]; E) \cap C^1 ((0, \tau); E) \cap L^2 ((0, \tau); \Pi).
\end{equation}
Moreover, for any initial state $g^0 \in E$, the weak solution $g(t; g^0)$ becomes a strong solution for $t \in (0, \tau)$, which has the regularity
\begin{equation} \bl{ss}
	g \in C((0, \tau]; \Pi) \cap C^1 ((0, \tau); \Pi).
\end{equation}
\end{proposition}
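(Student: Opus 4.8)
The plan is to prove Proposition \ref{pps} by the Galerkin approximation scheme, followed by uniform a priori estimates, a compactness passage to the limit, and parabolic smoothing, exactly as announced before the statement. First I would fix the orthonormal basis $\{e_k\}_{k\geq1}$ of $L^2(\gw)$ formed by the Neumann eigenfunctions of $-\gd$ (which are simultaneously orthogonal in $H^1(\gw)$), and set $E_n$ to be the $4m$-fold product of $\Span\{e_1,\dots,e_n\}$. Projecting the evolutionary equation \eqref{pb} onto $E_n$ by the orthogonal projection $P_n$ gives a finite system of ordinary differential equations for the coefficients of $g^{(n)}=P_n g$. Since the nonlinear map $f$ of \eqref{opf} is locally Lipschitz from $\Pi$ into $E$ and the coupling map $F$ of \eqref{opg} is bounded and linear on $E$, the projected vector field is locally Lipschitz on $E_n$, so the Picard-Lindel\"of theorem furnishes a unique approximate solution $g^{(n)}$ on a maximal interval $[0,\tau_n)$.

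The heart of the argument is a uniform a priori bound valid on a common interval $[0,\tau]$. Taking the $L^2(\gw)$ inner product of the $i$-th block of the projected system with $g^{(n)}_i$ and summing over $1\leq i\leq m$, the superlinear dissipations are the quartic term $-b\int_\gw u_i^4\,dx$ coming from the cubic nonlinearity $-bu_i^3$ and the genuinely dissipative memristive term $-k_1\de\int_\gw \rho_i^2 u_i^2\,dx$ (of favorable sign since $k_1,\de>0$), and the whole estimate turns on showing that these dominate the indefinite contributions. The cubic term $a\int_\gw u_i^3\,dx$ and the memristive cross term $-k_1\ga\int_\gw \rho_i u_i^2\,dx$ from \eqref{vp} are split by Young's inequality into small multiples of $\int_\gw u_i^4\,dx$ and $\int_\gw \rho_i^2 u_i^2\,dx$ plus lower-order $L^2$ terms, while the remaining piece $-k_1 c\int_\gw u_i^2\,dx$ is merely an $L^2$ term that Gronwall absorbs. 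The coupling map is harmless: summation gives $P\sum_{i,j}\langle u_j-u_i,u_i\rangle = P\big(\|\sum_i u_i\|^2 - m\sum_i\|u_i\|^2\big)\leq0$ by the Cauchy-Schwarz inequality, and likewise for the $Q$-coupling. Combining with the routine estimates for the $v_i$-, $w_i$-, and $\rho_i$-equations and invoking Gronwall's inequality yields a bound
\[
\sup_{t\in[0,\tau]}\|g^{(n)}(t)\|_E^2 + \int_0^\tau \|g^{(n)}(s)\|_\Pi^2\,ds \leq C\big(\|g^0\|_E,\tau\big),
\]
uniform in $n$, which shows in particular that the $\tau_n$ may be taken to equal a common $\tau>0$.

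With these bounds I would extract a subsequence converging weakly-$*$ in $L^\infty(0,\tau;E)$ and weakly in $L^2(0,\tau;\Pi)$; since the equation bounds $\pdr_t g^{(n)}$ in $L^2(0,\tau;\Pi^*)$, the Aubin-Lions-Simon lemma upgrades this to strong convergence in $L^2(0,\tau;E)$, and hence to almost-everywhere convergence on $[0,\tau]\times\gw$. This strong convergence is exactly what is needed to pass to the limit in the cubic nonlinearity $u_i^3$ and in the memristive product $\rho_i^2 u_i$, identifying the limit $g$ as a weak solution in the sense of Definition \ref{D:wksn} and yielding the regularity \eqref{soln}. Uniqueness and continuous dependence on $g^0$ then follow by subtracting two solutions, testing the difference against itself, bounding the nonlinear differences through the local Lipschitz constant of $f$ on bounded subsets of $\Pi$, and closing with Gronwall's inequality. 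Finally, the strong-solution regularity \eqref{ss} comes from the analytic parabolic smoothing of the semigroup $e^{At}$ generated by the operator $A$ in \eqref{opA}: for $t>0$ the solution enters $\mathcal{D}(A)\subset\Pi$, and a bootstrap on the variation-of-constants formula $g(t)=e^{At}g^0+\int_0^t e^{A(t-s)}[f(g(s))+F(g(s))]\,ds$, using the regularity results of \cite{SY}, gives $g\in C((0,\tau];\Pi)\cap C^1((0,\tau);\Pi)$.

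The step I expect to be the main obstacle is the uniform a priori estimate: one must confirm that the two quartic dissipations $-b\int_\gw u_i^4\,dx$ and $-k_1\de\int_\gw \rho_i^2 u_i^2\,dx$ can simultaneously absorb the cubic term $au_i^3$, the memristive cross term $k_1\ga\rho_i u_i^2$, and any coupling-induced growth, so that the Young-inequality splitting leaves a clean scalar differential inequality for $\|g^{(n)}\|_E^2$ and not a finite-time blow-up. The secondary delicate point is the limit passage in these cubic-type nonlinearities, which genuinely requires the strong Aubin-Lions convergence rather than mere weak convergence.
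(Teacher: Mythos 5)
Your proposal is correct and follows exactly the route the paper indicates: the paper gives no detailed proof of Proposition \ref{pps}, only the remark that it follows from the Galerkin approximation method of \cite{CV} together with the regularity property of the parabolic semigroup $e^{At}$ from \cite{SY}, and your Galerkin / uniform a priori estimate / Aubin--Lions / parabolic-smoothing argument is the standard faithful execution of that plan. The sign structure you single out as the main obstacle (the quartic dissipation $-b\int_\gw u_i^4\,dx$ absorbing the cubic term and the $\beta u_i^2 v_i$ contribution, the term $-k_1\de\int_\gw \rho_i^2 u_i^2\,dx$ absorbing the memristive cross term, and the negativity of the symmetric coupling sums) is precisely the grouping the paper itself carries out later in the proof of Theorem \ref{T1}.
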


We refer to \cite{CV, SY} for the basics of infinite dimensional dynamical systems or called semiflow if for $t \geq 0$ only. 

\begin{definition} \label{Dabsb}
	Let $\{S(t)\}_{t \geq 0}$ be a semiflow on a Banach space $\ms{X}$. A bounded set $B^*$ in $\ms{X}$ is called an absorbing set of this semiflow if for any given bounded set $B \subset \ms{X}$ there exists a finite time $T_B \geq 0$ depending on $B$, such that $S(t)B \subset B^*$ for all $t  > T_B$. The semiflow is called dissipative if there exists an absorbing set.
\end{definition}

Young's inequality in a general form below will be used throughout. For any two non-negative numbers $x$ and $y$, if $\frac{1}{p} + \frac{1}{q} = 1$ and $p > 1, q > 1$, one has
\beq \bl{Yg}
	x\,y \leq \frac{1}{p} \, \ve x^p + \frac{1}{q} \, C(\ve, p)\, y^q \leq \ve x^p + C(\ve, p)\, y^q, \quad C(\ve, p) = \ve^{-q/p},
\eeq
where constant $\ve > 0$ can be arbitrarily small. Moreover, the Gagliardo-Nirenberg interpolation inequalities \cite[Theorem B.3]{SY} will be used in a crucial step of the proof toward the main result on synchronization of the neural networks in Section 4.

\section{\textbf{Dissipative Dynamics of the Memristive Neural Networks}}

We first prove the global existence of weak solutions in time for the initial value problem \eqref{pb}. Then we show the existence of absorbing set for the solution semiflow in the state spaces $E$ and $\Psi$. Note that $\Pi \subset \Psi \subset E$ are continuous embeddings.

\begin{theorem} \label{T1}
	For any initial state $g^0 \in E$ \textup{(}resp. $g^0 \in \Psi$\textup{)}, there exists a unique global weak solution in the space $E$ \textup{(}resp. $\Psi$\textup{)}, $g(t; g^0) = \textup{col}\, (u_i(t), v_i(t), w_i(t), \rho_i(t): 1 \leq i \leq m), \, t \in [0, \infty)$, to the initial value problem \eqref{pb} of the memristive and diffusive Hindmarsh-Rose equations \eqref{cHR} for the neural network $\mathcal{NW}$. 
\end{theorem}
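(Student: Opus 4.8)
The plan is to promote the local solution of Proposition~\ref{pps} to a global one by the continuation principle. By \eqref{soln} the solution is unique on some $[0,\tau)$, and by the instant smoothing \eqref{ss} it lies in $\Pi \subset [H^1(\gw)]$ for the $u$- and $\rho$-components once $t>0$; since $H^1(\gw)\hookrightarrow L^4(\gw)$ for $n\leq 3$, every polynomial and memristive integral appearing below is finite for $t>0$, which legitimizes all the formal energy manipulations. The solution therefore extends to a maximal interval $[0,\tau_{\max})$, and to conclude $\tau_{\max}=\infty$ it suffices to rule out finite-time blow-up of $\|g(t)\|_E$. Thus the entire task reduces to establishing an \emph{a priori} differential inequality
\beq
	\frac{d}{dt}\,\|g(t)\|_E^2 + C_1\,\|g(t)\|_E^2 \leq C_2,
\eeq
with constants $C_1,C_2>0$ independent of $t$ and of $g^0$, after which Gronwall's lemma yields a uniform bound on every finite interval and forces the solution to be global.

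To produce this inequality I would test each block of \eqref{cHR} with its natural multiplier and sum over $i=1,\dots,m$. Testing the $u_i$-equation with $u_i$ invokes the Neumann condition \eqref{nbc} to discard $\eta_1\inpt{\gd u_i,u_i}=-\eta_1\|\nb u_i\|^2\leq 0$ and exposes the decisive quartic sink $-b\|u_i\|_{L^4}^4$; testing the $v_i$-, $w_i$-, $\rho_i$-equations with $v_i$, $w_i$, $\rho_i$ produces the linear sinks $-\|v_i\|^2$, $-r\|w_i\|^2$, $-k_2\|\rho_i\|^2$. The source terms $a\,u_i^3$, the exchanges $\inpt{v_i-w_i+J_e,u_i}$, $q\inpt{u_i-u_e,w_i}$, $\inpt{u_i,\rho_i}$, and the $v$-to-$u$ feedback $-\beta\inpt{u_i^2,v_i}$ are all absorbed through Young's inequality \eqref{Yg}, every quartic remainder being charged to the reservoir $-b\|u_i\|_{L^4}^4$ and every quadratic remainder to the matching linear sink. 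The network terms are handled by a separate and favorable computation: summing,
\beq
	\sum_{i=1}^m \Big\langle \sum_{j=1}^m P(u_j-u_i),\, u_i \Big\rangle = P\Big( \big\|{\textstyle\sum_{i=1}^m u_i}\big\|^2 - m\sum_{i=1}^m \|u_i\|^2\Big) \leq 0
\eeq
by Cauchy--Schwarz, and likewise for the $Q$-coupling in the $\rho_i$-equations, so the linear coupling is dissipative and never obstructs the estimate.

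The crux is the memristive nonlinearity $-k_1\vp(\rho_i)u_i=-k_1(c+\ga\rho_i+\de\rho_i^2)u_i$. After testing with $u_i$ its leading contribution is $-k_1\de\iw \rho_i^2 u_i^2\,dx\leq 0$ (as $k_1,\de>0$), the one term that genuinely cooperates, which I would retain as a second reservoir. The sign-indefinite piece $-k_1\ga\iw \rho_i u_i^2\,dx$ (recall $\ga\in\mR$ is unsigned) is then split by Young's inequality,
\beq
	k_1|\ga|\!\iw |\rho_i|\,u_i^2\,dx \leq \frac{k_1\de}{2}\!\iw \rho_i^2 u_i^2\,dx + \frac{k_1\ga^2}{2\de}\,\|u_i\|^2,
\eeq
charging half to the good memristive term and leaving an $L^2$ remainder (joined by the harmless $-k_1c\|u_i\|^2$) that the quartic $u$-dissipation dominates via the elementary bound $\|u_i\|^2\leq \ve\|u_i\|_{L^4}^4+C_\ve$. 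I expect this balancing to be the main obstacle: the quartic production from $a\,u_i^3$ and from $\beta\inpt{u_i^2,v_i}$ must be dominated by $-b\|u_i\|_{L^4}^4$ \emph{uniformly in the parameters}, which fails with the raw multipliers because shrinking $\ve$ in Young's inequality simultaneously inflates the quadratic remainder. The remedy is to test instead with \emph{weighted} multipliers $\lambda_1 u_i,\lambda_2 v_i,\lambda_3 w_i,\lambda_4\rho_i$: one first fixes the Young exponents so that each quadratic remainder is at most a fixed fraction of its linear sink, and then takes $\lambda_1$ on the $u$-block large enough that $\lambda_1 b$ exceeds the total quartic production. With the weights locked, collecting all contributions yields the target inequality; Gronwall's lemma then gives the global bound, and together with the local uniqueness of Proposition~\ref{pps} this establishes existence and uniqueness of the weak solution on $[0,\infty)$.
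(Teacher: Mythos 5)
Your proposal is correct and follows essentially the same route as the paper: a weighted $L^2$ energy estimate summed over the network, with the cubic and quadratic sources absorbed into the quartic sink $-b\|u_i\|_{L^4}^4$ via Young's inequality, the memristive term reduced to a quadratic remainder by playing $\gamma\rho_i u_i^2$ against $-\delta\rho_i^2 u_i^2$ (the paper completes the square instead, to the same effect), the coupling sums discarded as nonpositive, and Gronwall closing the uniform bound that rules out finite-time blow-up. The only divergence is that the paper multiplies the $\rho_i$-equation by $\rho_i^3$ rather than $\rho_i$, so as to control $\|\rho_i\|_{L^4}^4$ --- not needed for this theorem but exploited later in the synchronization argument --- whereas your $L^2$ multiplier suffices here.
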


\begin{proof}
Summing up the $L^2$ inner-products of the $u_i$-equation with $C_1 u_i(t)$ for $1 \leq i \leq m$, where the scaling constant $C_1 > 0$ is to be determined, we get
\begin{equation} \label{u1}
	\begin{split}
	&\frac{C_1}{2} \frac{d}{dt}\, \sum_{i = 1}^m \|u_i (t)\|^2 + C_1 \eta_1 \, \sum_{i = 1}^m \|\nb u_i (t)\|^2  \\
	= &\, \sum_{i = 1}^m \int_\gw C_1 \left[ au_i^3 -bu_i^4  + u_i v_i - u_i w_i + J_e  u_i - k_1 \vp (\rho_i) u_i^2 \right] dx \\
	& - \sum_{i = 1}^m \sum_{j = 1}^m \,\int_\gw C_1 P (u_i - u_j)^2\, dx.
	\end{split} 
\end{equation}
Then sum up the $L^2$ inner-products of the $v_i$-equation with $v_i (t)$ and the $L^2$ inner-products of the $w_i$-equation with $w_i(t)$  for $1 \leq i \leq m$. By Young's inequality \eqref{Yg}, we have
\begin{equation} \label{v1}
	\begin{split}
	&\frac{1}{2} \frac{d}{dt}\, \sum_{i = 1}^m \|v_i (t)\|^2 = \sum_{i = 1}^m \int_\gw \left(\ap v_i - \gb u_i^2 v_i - v_i^2 \right) dx   \\
	\leq\, &\sum_{i = 1}^m \int_\gw \left(\ap v_i +\frac{1}{2} (\gb^2 u_i^4 + v_i^2) - v_i^2 \right) dx    \\
	\leq \,&\sum_{i = 1}^m \int_\gw \left(2\ap^2 + \frac{1}{8} v_i^2 +\frac{1}{2} \gb^2 u_i^4 - \frac{1}{2} v_i^2 \right) dx  \\
	= &\,\sum_{i = 1}^m \int_\gw \left(2\ap^2 +\frac{1}{2} \gb^2 u_i^4 - \frac{3}{8}\, v_i^2 \right) dx
	\end{split}
\end{equation} 
and
\begin{equation} \label{w1}
\begin{split}
	&\frac{1}{2}\, \frac{d}{dt}\, \sum_{i = 1}^m \| w_i (t)\|^2 = \sum_{i = 1}^m \int_\gw \left(q (u_i - u_e) w_i  - r w_i^2 \right) dx  \\
	\leq &\, \sum_{i = 1}^m \int_\gw \left( \frac{q^2}{2r} (u_i - u_e)^2 + \frac{1}{2} r w_i^2 - r w_i^2 \right) dx \leq \sum_{i = 1}^m \int_\gw \left(\frac{q^2}{r} (u_1^2 + u_e^2) - \frac{1}{2}\, r w_i^2 \right) dx.
\end{split} 
\end{equation}
Next sum up the $L^2$ inner-products of the $\rho_i$-equation with $\rho^3_i(t)$ for $1 \leq i \leq m$. We have

\begin{equation*}
\begin{split} 
	&\frac{1}{4}\, \frac{d}{dt}\, \sum_{i = 1}^m \|\rho_i (t)\|^4_{L^4} + 3 \eta_2 \, \sum_{i = 1}^m \|\rho_i(t) \nb \rho_i(t)\|^2  \\
	= &\, \sum_{i = 1}^m \int_\gw \left(u_i \rho^3_i - k_2 \rho_i^4 \right) dx - \sum_{i = 1}^m \sum_{j = 1}^m \,\int_\gw Q (\rho_i - \rho_j)^2 (\rho^2_i + \rho_i \rho_j + \rho^2_j)\, dx     \\ 
	\leq &\,\sum_{i = 1}^m \int_\gw \left(\frac{1}{4 k^3_2} u_i^4 - \left(1 - \frac{3}{4}\right) k_2 \, \rho_i^4)\right) dx = \sum_{i = 1}^m \frac{1}{4}\int_\gw \left(\frac{u_i^4}{k^3_2} - k_2 \rho_i^4 \right) dx,
\end{split}
\end{equation*}
so that
\beq \bl{rh1}
	 \frac{1}{2}\frac{d}{dt}\, \sum_{i = 1}^m \|\rho_i (t)\|^4_{L^4} \leq \sum_{i = 1}^m \int_\gw \frac{1}{2} \left(\frac{u_i^4}{k^3_2} - k_2 \rho_i^4 \right) dx.
\eeq
	
Now choose the constant 
$$
	C_1 = \frac{1}{b} \left(\frac{\gb^2}{2} + \frac{1}{2k^3_2} + 4 \right). 
$$
Then for $1 \leq i \leq m$, we have
\beq \bl{bu4}
	\int_\gw (- \,C_1 b u_i^4)\, dx + \int_\gw \frac{\gb^2}{2} u_i^4\, dx + \int_\gw \frac{1}{2k_2^3} u_i^4 \, dx = \int_\gw (- \,4 u_i^4)\, dx.
\eeq  
The following integral terms on the right-hand side of \eqref{u1} can be estimated:
\beq \bl{nu}
	\int_\gw C_1 au_i^3\, dx \leq \frac{3}{4} \int_\gw u_i^4\, dx + \frac{1}{4}\int_\gw (C_1 a)^4 \, dx < \int_\gw u_i^4\, dx + (C_1 a)^4 |\gw|,  
\eeq
and
\beq \bl{nv}
	\begin{split}
	& \int_\gw C_1 (u_i v_i - u_i w_i + J_e u_i)\, dx \\
	\leq &\, \int_\gw \left(2(C_1 u_i)^2 + \frac{1}{8} v_i^2 + \frac{(C_1 u_i)^2}{r} + \frac{1}{4} r w_i^2 + C_1 u_i^2 + C_1J_e^2 \right) dx    \\
	\leq &\, \int_\gw u_i^4 \, dx + |\gw |\left[C_1^2 \left(2 +\frac{1}{r}\right) + C_1\right]^2 + \int_\gw \left( \frac{1}{8} v_i^2 + \frac{1}{4} r w_i^2 + C_1J_e^2 \right) dx,
	\end{split}
\eeq
and for the memristive coupling term, by completing square of the quadratic form \eqref{vp}, we see that
\beq  \bl{nrh}
	- \int_\gw C_1 k_1 \vp(\rho_i)u_i^2\, dx = - \int_\gw C_1 k_1 (c + \ga \rho_i + \delta \rho_i^2) u_i^2 \leq C_1 k_1 \left( |c| + \frac{\ga^2}{\de} \right) \int_\gw u_i^2\, dx.
\eeq
In \eqref{w1}, 
\beq \bl{nw}
	\int_\gw \frac{1}{r} \, q^2 u_i^2 \, dx \leq \int_\gw \left(\frac{u_i^4}{2} + \frac{q^4}{2r^2}\right) dx  \leq \int_\gw u_i^4\, dx + \frac{q^4}{r^2}\, |\gw|.
\eeq
		
Substitute the above term estimates \eqref{bu4} through \eqref{nw} into the differential inequalities \eqref{u1} and \eqref{w1}. Then sum up the resulted inequalities \eqref{u1}, \eqref{v1}, \eqref{w1} and \eqref{rh1}. One has
\beq \label{g2} 
	\begin{split}
	&\frac{1}{2} \frac{d}{dt} \sum_{i = 1}^m \left(C_1 \|u_i\|^2 + \|v_i \|^2 + \|w_i \|^2) + \|\rho_i \|_{L^4}^4 \right) + \sum_{i = 1}^m \left[C_1 \eta_1 \, \|\nb u_i\|^2 + 3\eta_2\|\rho_i \nb \rho_i\|^2\right] \\
	\leq &\, \sum_{i = 1}^m \int_\gw C_1 (au_i^3 -bu_i^4  + u_i v_i - u_i w_i + J_e  u_i - k_1 \vp (\rho_i) u_i^2)\, dx   \\
	& - \sum_{i = 1}^m \sum_{j = 1}^m\,\int_\gw C_1 P(u_i - u_j)^2\, dx \\
	&+ \sum_{i = 1}^m \int_\gw \left(2\ap^2 +\frac{1}{2} \gb^2 u_i^4 - \frac{3}{8}\, v_i^2 \right) dx + \sum_{i = 1}^m \int_\gw \left(\frac{q^2}{r} (u_1^2 + u_e^2) - \frac{1}{2}\, r w_i^2 \right) dx \\
	&+ \sum_{i = 1}^m \int_\gw \left(\frac{1}{2 k^3_2} u^4_i - \frac{k_2}{2} \rho_i^4 \right) dx   \\
	\leq & \int_\gw (3 - 4) \left(\sum_{i = 1}^m u_i^4\right) dx + C_1 k_1 \int_\gw \left( |c| + \frac{\ga^2}{\de} \right) \int_\gw \left(\sum_{i = 1}^m u^2_i \right) dx \\
	&+ \int_\gw \left(\frac{1}{8} - \frac{3}{8}\right) \left(\sum_{i = 1}^m v^2_i \right)  dx + \int_\gw \left(\frac{1}{4} - \frac{1}{2} \right) \left(\sum_{i = 1}^m r w^2_i \right) dx - \int_\gw \frac{k_2}{2}\, \left(\sum_{i = 1}^m \rho^4_i \right) dx  \\
	&+ m |\gw | \left( (C_1 a)^4 + C_1 J_e^2  + \left[C_1^2 \left(2 +\frac{1}{r}\right) + C_1\right]^2 + 2 \ap^2 + \frac{q^2 u_e^2}{r} + \frac{q^4}{r^2} \right) \\
	= &\, - \int_\gw \sum_{i = 1}^m \left( u_i^4 + \frac{1}{4} v_i^2 + \frac{1}{4} r\, w_i^2 + \frac{1}{2} k_2\, \rho_i^4 \right) dx + C_1 k_1 \int_\gw \left( |c| + \frac{\ga^2}{\de} \right) \int_\gw \left(\sum_{i = 1}^m u^2_i \right) dx  \\
	&+ m |\gw | \left((C_1 a)^4 + C_1 J_e^2  + \left[C_1^2 \left(2 +\frac{1}{r}\right) + C_1\right]^2 + 2\ap^2 + \frac{q^2 u_e^2}{r} + \frac{q^4}{r^2} \right). 	
	\end{split}
\eeq 
Note that by square completion there is a fixed positive constant 
$$
	C_2 = \frac{1}{4} \left[C_1 k_1 \left( |c| + \frac{\ga^2}{\de} \right) + \frac{1}{4} \right]^2
$$ 
such that 
$$ 
	- u_i^4 + C_1 k_1 \left( |c| + \frac{\ga^2}{\de} \right) u^2_i \leq - \frac{1}{4} C_1 u^2_i + C_2,   \quad \text{for} \; 1 \leq i \leq m.
$$
Then from \eqref{g2} we obtain the differential inequality 
\begin{equation} \bl{GE}
	\begin{split}
        \frac{d}{dt} \sum_{i = 1}^m &\, \left(C_1 \|u_i (t)\|^2 + \|v_i (t)\|^2 + \|w_i (t) \|^2) + \|\rho_i (t)\|_{L^4}^4 \right) + \sum_{i = 1}^m 2 C_1 \eta \, \|\nb u_i (t) \|^2 \\
	\leq &\, - \sum_{i = 1}^m \int_\gw \left(\frac{1}{2} C_1 u_i^2 + \frac{1}{2} v_i^2 + \frac{1}{2} r\, w_i^2+ k_2 \,\rho_i^4 \right) dx + M |\gw| \\
	\leq &\, - \frac{1}{2} \sum_{i = 1}^m \int_\gw \left(C_1 u_i^2 + v_i^2 + r\, w_i^2+ k_2 \,\rho_i^4\right) dx + M |\gw|,
	\end{split}
\end{equation}
where
\beq \bl{M} 
	M =  m \left[C_2 + (C_1 a)^4 + C_1 J_e^2  + \left[C_1^2 \left(2 +\frac{1}{r}\right) + C_1\right]^2 + 2 \ap^2 + \frac{q^2 u_e^2}{r} + \frac{q^4}{r^2} \right] . 
\eeq
Set $\gl = \frac{1}{2} \min \{1, r, k_2\}$. From \eqref{GE} it follows that
\begin{equation} \label{ge}
	\begin{split}
	\frac{d}{dt} \sum_{i = 1}^m &\, \left(C_1 (\|u_i\|^2 + \|v_i\|^2 + \|w_i\|^2 + \|\rho_i\|_{L^4}^4\ \right) + \sum_{i = 1}^m 2 C_1 \eta \, \|\nb u_i (t) \|^2  \\
	&+ \gl \, \sum_{i = 1}^m \left( C_1 \| u_i\|^2 + \| v_i \|^2 + \|w_i\|^2 + \|\rho_i\|_{L^4}^4 \right) \leq M |\gw |,
	\end{split}
\end{equation}
for $t \in I_{max} = [0, T_{max})$, which is the maximal time interval of solution existence.
	
We can apply Gronwall inequality to \eqref{ge} (gradient terms removed) to reach the following estimate for all weak solutions $g(t; g^0)$ with $g^0 \in \Psi$ of the problem \eqref{pb},
\beq \label{dse}
	\begin{split}
	&\interleave g(t; g^0) \interleave = \sum_{i = 1}^m \left(\|u_i (t)\|^2 + \| v_i (t) \|^2 + \|w_i (t)\|^2 + \|\rho_i (t) \|_{L^4}^4 \right)  \\
         \leq &\,\frac{\max \{C_1, 1\}}{\min \{C_1, 1\}}e^{- \gl t} \sum_{i = 1}^m \left( \|u_i^0 \|^2 + \|v_i^0\|^2 + \|w_i^0\|^2 + \|\rho_i^0\|^4_{L^4} \right)  + \frac{M}{\gl\, \min \{C_1, 1\}} |\gw |  \\
         = &\, \frac{\max \{C_1, 1\}}{\min \{C_1, 1\}}\, e^{- \gl t} \interleave g^0 \interleave + \, \frac{M}{\gl \, \min \{C_1, 1\}} |\gw|, \quad \text{for} \;\, t \in [0, \infty).
	\end{split}
\eeq 
Here $I_{max} = [0, \infty)$ because \eqref{dse} shows that every weak solution $g(t; g^0)$ of the initial value problem \eqref{pb} will never blow up in the space $\Psi$ at any finite time for any initial state $g^0 \in \Psi$. Moreover, due to $\|\rho_i(t)\|^2 \leq \|\rho_i (t)\|^2_{L^4} |\gw |^{1/2}$ and \eqref{ss} as well as $\Pi \subset \Psi \subset E$, we can confirm that all the weak solutions with any initial state $g^0 \in E$ for the memristive and diffusive Hindmarsh-Rose equations \eqref{cHR} exist in the state space $E$ globally in time. The proof is completed.
\end{proof}

The global existence and uniqueness of the weak solutions and their continuous dependence on the initial data shown in Theorem \ref{T1} enable us to define the solution semiflow \cite{SY} of the memristive and diffusive Hindmarsh-Rose equations \eqref{cHR} on the space $E$ as follows: 
$$
	S(t): g_0 \longmapsto g(t; g_0) = \text{col}\, (u_i (t, \cdot), v_i (t, \cdot), w_i (t, \cdot), \rho_i (t, \cdot): 1 \leq i \leq m), \quad t \geq 0.
$$
We call this semiflow $\{S(t)\}_{t \geq 0}$ the \emph{memristive Hindmarsh-Rose neural network semiflow} generated by the evolutionary equation \eqref{pb} of the neural network model \eqref{cHR}. 

The next theorem shows that the memristive Hindmarsh-Rose neural network semiflow $\{S(t)\}_{t \geq 0}$ is a dissipative dynamical system in the state space $E$.

\begin{theorem} \label{T2}
	There exists a bounded absorbing set for the memristive Hindmarsh-Rose neural network semiflow $\{S(t)\}_{t \geq 0}$ in the space $E$, which is a bounded ball 
\beq \label{abs}
	B^* = \{ h \in E: \| h \|^2_E \leq K\}
\eeq 
where 
\beq \bl{K}
	K = \left[\frac{M}{\gl \min \{C_1, 1\}} + \sqrt{\frac{M}{\gl \min \{C_1, 1\}}}\,\right] |\gw| + 1.
\eeq
\end{theorem}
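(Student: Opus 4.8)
The plan is to derive Theorem~\ref{T2} directly from the uniform dissipative estimate \eqref{dse} established in the proof of Theorem~\ref{T1}: I would first extract from \eqref{dse} an asymptotic bound on the quasi-norm that is uniform over $E$-bounded sets of initial data, then convert that quasi-norm bound into an $E$-norm bound by H\"older's inequality, and finally read off the absorbing property required by Definition~\ref{Dabsb}. Abbreviating $D = M/(\gl \min\{C_1,1\})$, the exponential prefactor $e^{-\gl t}$ in \eqref{dse} forces
\[
	\limsup_{t \to \infty} \interleave g(t; g^0) \interleave \leq D\,|\gw|,
\]
so that for any bounded set $B \subset E$ there should be a finite entry time $T_B$ after which $\interleave g(t;g^0)\interleave \leq D\,|\gw| + \ve$ holds for every $g^0 \in B$ and every $t > T_B$, with $\ve>0$ as small as desired.

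To make $T_B$ uniform I must control $\sup_{g^0 \in B}\interleave g^0\interleave$, and this is the delicate point: the quasi-norm \eqref{qnm} involves $\|\rho_i^0\|^4_{L^4}$, whereas boundedness of $B$ in $E$ controls only the $L^2$ norm of $\rho_i^0$. I would resolve this with the instantaneous smoothing of Proposition~\ref{pps}. By \eqref{ss} the orbit enters $\Pi \subset G$ for every $t>0$; accompanied by a uniform estimate in the stronger norm, this makes $\interleave g(1;g^0)\interleave$ finite and uniformly bounded over $B$, and then restarting \eqref{dse} from the initial time $t=1$ yields the uniform entry time $T_B$ with exponential forgetting of the entry data.

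Once the quasi-norm has entered $D|\gw|+\ve$, the passage to the $E$-norm is the remaining computation. Writing
\[
	\|g(t)\|_E^2 = \sum_{i=1}^m\big(\|u_i\|^2 + \|v_i\|^2 + \|w_i\|^2 + \|\rho_i\|^2\big),
\]
the first three terms summed are bounded outright by $\interleave g(t)\interleave$, while for the fourth I would use $\|\rho_i\|^2 \leq |\gw|^{1/2}\|\rho_i\|^2_{L^4}$ together with $\|\rho_i\|_{L^4}^4 \leq \interleave g(t)\interleave \leq D|\gw|+\ve$; taking square roots contributes a term of order $\sqrt{D}\,|\gw|$. Summing the two contributions and absorbing $\ve$ into an additive constant produces precisely the linear-plus-square-root radius $K$ of \eqref{K}, whence $\|g(t)\|_E^2 \leq K$, i.e. $S(t)B \subset B^*$ for all $t > T_B$, which is the absorbing property of the ball \eqref{abs}.

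The main obstacle is not the closing H\"older conversion, which is routine and merely dictates the two-term shape of $K$, but the mismatch between the topology in which $B$ is bounded ($E$, i.e. $L^2$ in $\rho_i$) and the $L^4$ control in $\rho_i$ demanded by the quasi-norm appearing in \eqref{dse}. Bridging this gap is what requires the parabolic regularization of Proposition~\ref{pps} to place the orbit into $G$ after arbitrarily short time, so that the exponential-forgetting mechanism of \eqref{dse} can be applied uniformly across all initial data in $B$.
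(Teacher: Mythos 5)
Your proposal is correct and follows essentially the same route as the paper's proof: extract the asymptotic bound $\limsup_{t\to\infty}\interleave g(t;g^0)\interleave \le M|\gw|/(\gl\min\{C_1,1\})$ from \eqref{dse}, convert to the $E$-norm via $\|\rho_i\|^2\le|\gw|^{1/2}\|\rho_i\|^2_{L^4}$, and read off the linear-plus-square-root radius $K$ in \eqref{K}. The one substantive difference is your treatment of the uniform entry time. The paper simply sets $T_B=\gl^{-1}\log^+\bigl(\Gamma\,\max\{C_1,1\}/\min\{C_1,1\}\bigr)$ for $B=\{h:\|h\|_E^2\le\Gamma\}$, which implicitly bounds the initial quasi-norm $\interleave g^0\interleave$ by $\Gamma$; this is not literally available, since the quasi-norm \eqref{qnm} contains $\|\rho_i^0\|^4_{L^4}$ while membership in $B$ controls only $\|\rho_i^0\|^2_{L^2}$. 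You identify exactly this mismatch and propose the honest repair: use the parabolic smoothing of Proposition \ref{pps} (regularity \eqref{ss}) to place the orbit in $\Pi\subset G$ at a positive time and restart \eqref{dse} from there. That is the right idea, and it is the same device the paper itself uses later for the $u_i$-component in \eqref{H1b}--\eqref{ut0}; to fully close your argument you would still need to write out the corresponding uniform $L^4$ bound on $\rho_i$ at the restart time (uniform over $g^0\in B$), which you assert but do not derive. With that caveat, your version is, if anything, more careful than the paper's own proof.
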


\begin{proof} 
From the uniform estimate result shown in the first inequality of \eqref{dse} and $\|\rho_i(t)\|^2 \leq \|\rho_i (t)\|^2_{L^4} |\gw |^{1/2}$ we can assert that 
\beq \label{lsp}
	\limsup_{t \to \infty} \, \|g(t; g^0)\|^2_E < K
\eeq
for all weak solutions of \eqref{pb} and any $g^0 \in E$. Moreover, for any positive constant $\Gamma$ and the bounded set $B = \{h \in E: \|h \|^2_E \leq \Gamma \}$ in $E$, there exists a finite time 
$$
	T_B = \frac{1}{\gl} \log^+ \left(\Gamma\, \frac{\max \{C_1, 1\}}{\min \{C_1, 1\}}\right)
$$
such that $\|g(t; g^0) \|^2_E < K$ for all $t > T_B$ and any $g^0 \in B$. By Definition \ref{Dabsb}, the bounded ball $B^*$ in \eqref{abs} is an absorbing set for this memristive Hindmarsh-Rose neural network semiflow $\{S(t)\}_{t \geq 0}$, which is dissipative in the state space $E$. 
\end{proof}

We can further prove the dissipative dynamics of the semiflow $S(t)_{t \geq 0}$ in the higher-order integrable state space $G = [L^4(\gw) \times L^2 (\gw, \mathbb{R}^2) \times L^4(\gw)]^m$.

\begin{theorem} \bl{Tu}
There exists a constant $D > 0$ independent of any initial state, such that the $u_i$-components of the memristive Hindmarsh-Rose neural network semiflow $\{S(t)\}_{t \geq 0}$ has the uniform dissipative property that for any given bounded set $B \subset G$ there is a finite time $\mathcal{T}_B > 1$ and 

\beq \bl{Ku}
	\sup_{g^0 \in B} \left(\sum_{i = 1}^m \, \int_\gw u_i^4 (t, x) \, dx\right) \leq D, \quad \text{for} \;\;  t > \mathcal{T}_B.
\eeq
Consequently, there exists a bounded absorbing set $\widehat{B}\subset G$ for the semiflow $\{S(t)\}_{t \geq 0}$.
\end{theorem}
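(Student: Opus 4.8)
The plan is to obtain a self-contained differential inequality for $y(t) = \sum_{i=1}^m \int_\gw u_i^4(t,x)\, dx$ by testing the $u_i$-equation against $u_i^3$, and then to close it by feeding in the already-established $E$-dissipation of Theorem \ref{T1}. First I would take the $L^2$ inner-product of the $u_i$-equation in \eqref{cHR} with $u_i^3(t)$, sum over $1 \leq i \leq m$, and integrate by parts using the Neumann condition \eqref{nbc}. This yields
$$\frac{1}{4}\frac{d}{dt}\, y(t) + 3\eta_1 \sum_{i=1}^m \int_\gw u_i^2 |\nb u_i|^2\, dx = \sum_{i=1}^m \int_\gw \big( a u_i^5 - b u_i^6 + v_i u_i^3 - w_i u_i^3 + J_e u_i^3 - k_1 \vp(\rho_i) u_i^4 \big)\, dx + \sum_{i=1}^m \sum_{j=1}^m \int_\gw P\,(u_j - u_i) u_i^3\, dx,$$
where the gradient term on the left is nonnegative and will be discarded, and the dissipation comes from the monomial $- b u_i^6$.

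The decisive structural point is that, after summation, the linear coupling term is nonpositive. Interchanging $i \leftrightarrow j$ and symmetrizing gives
$$\sum_{i,j} (u_j - u_i) u_i^3 = -\frac{1}{2} \sum_{i,j} (u_i - u_j)(u_i^3 - u_j^3) \leq 0,$$
since $s \mapsto s^3$ is increasing and $P > 0$, so this term can simply be dropped. For the memristive contribution I would expand $\vp(\rho_i) = c + \ga \rho_i + \de \rho_i^2$; the quadratic part produces $- k_1 \de \rho_i^2 u_i^4 \leq 0$ and is discarded, while Young's inequality \eqref{Yg} handles the rest as $k_1 |c|\, u_i^4 \leq \ve u_i^6 + C(\ve)$ and $k_1 |\ga|\, |\rho_i| u_i^4 \leq \ve u_i^6 + C(\ve) |\rho_i|^3$. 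The supercritical term $a u_i^5$ and the lower-order terms $v_i u_i^3$, $w_i u_i^3$, $J_e u_i^3$ are absorbed in the same way, each contributing a small multiple of $u_i^6$ together with a term controlled by $v_i^2$, $w_i^2$, or a constant. Choosing $\ve$ so small that the total coefficient of $\sum_i \int_\gw u_i^6$ stays below $\frac{b}{2}$, I arrive at a bound of the form $\frac{d}{dt} y + 2b \sum_i \int_\gw u_i^6 \leq C \sum_i \int_\gw ( v_i^2 + w_i^2 + |\rho_i|^3 )\, dx + C'|\gw|$.

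To close this I would invoke the uniform estimate \eqref{dse}: for any bounded $B \subset G$ there is a time $T_B$ after which $\sum_i (\|v_i\|^2 + \|w_i\|^2 + \|\rho_i\|_{L^4}^4)$ is bounded by a constant independent of $g^0$, and $\int_\gw |\rho_i|^3 \leq |\gw|^{1/4} \|\rho_i\|_{L^4}^3$ by H\"{o}lder, so the entire right-hand side is dominated by a uniform constant for $t > T_B$. Converting dissipation via the pointwise estimate $u_i^4 \leq \ve u_i^6 + C(\ve)$, which gives $\sum_i \int_\gw u_i^6 \geq \frac{1}{\ve} y - C(\ve)|\gw|$, reduces the inequality to $y' + \gk y \leq C^{*}$ on $t > T_B$ with $\gk > 0$ and $C^{*}$ uniform. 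Gronwall then gives $y(t) \leq y(T_B) e^{-\gk(t - T_B)} + C^{*}/\gk$; since $g^0 \in G$ and the solution is strong for $t > 0$ by Proposition \ref{pps}, $y(T_B)$ is finite and, running the same inequality on $[0, T_B]$, bounded uniformly over $B$. Hence there is a finite $\mathcal{T}_B$ beyond which $y(t) \leq D := C^{*}/\gk + 1$, which is \eqref{Ku}. Combining this $L^4$-bound on the $u_i$ with the bounds on $\|v_i\|^2$, $\|w_i\|^2$, $\|\rho_i\|_{L^4}^4$ coming from the $E$-absorbing set of Theorem \ref{T2} gives the absorbing set $\widehat{B} \subset G$.

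I expect the main obstacle to be the joint treatment of the two genuine nonlinearities in the $u_i^3$-test: establishing the sign of the coupling term through the cubic-monotonicity symmetrization above, and reabsorbing the indefinite memristive cross-term $-k_1 \ga \rho_i u_i^4$ using only the $L^4$-control of $\rho_i$ that Theorem \ref{T1} supplies. The two-stage Gronwall argument required to make the limiting bound $D$ independent of the initial data is a secondary but necessary technical point.
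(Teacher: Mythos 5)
Your proposal is correct and follows the same backbone as the paper's proof: test the $u_i$-equation with $u_i^3$, discard the nonnegative gradient term, kill the coupling term by the symmetrization $\sum_{i,j}(u_j-u_i)u_i^3=-\tfrac12\sum_{i,j}(u_i-u_j)^2(u_i^2+u_iu_j+u_j^2)\le 0$ (the paper writes this factored form directly), absorb $au_i^5$, $u_i^3(v_i-w_i+J_e)$ and the memristive term into a fraction of $b u_i^6$ by Young, convert $u_i^6$-dissipation to $u_i^4$-dissipation via $u^4\le u^6+1$, and close with the $E$-absorbing estimate of Theorem \ref{T2}. You deviate in two places. For the memristive term the paper does not split off $|\rho_i|^3$: it completes the square in $\rho_i$ to get the pointwise bound $-(c+\ga\rho_i+\de\rho_i^2)\le |c|+\ga^2/(4\de)$, so the $\rho$-dependence disappears entirely and no $L^3$ or $L^4$ control of $\rho_i$ is needed at this stage; your route works too but needs the extra H\"older step you describe. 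The more substantive difference is how the $L^4$ Gronwall is initialized. You start it at $t=0$ using $y(0)<\infty$ because $B\subset G$, and run a two-stage Gronwall on $[0,T_B]$ and $[T_B,\infty)$; this is legitimate for the theorem as stated, but the differential inequality for $y$ is only justified where the solution is regular ($t>0$), so taking it down to $t=0$ with data merely in $L^4$ requires an approximation argument (you do not have $C([0,\tau];L^4)$ continuity from Proposition \ref{pps}). The paper sidesteps this by integrating the energy inequality \eqref{ge} over $[T_B,T_B+1]$, extracting by the mean-value argument a time $t_0\in(T_B,T_B+1)$ at which $\sum_i\|u_i(t_0)\|_{H^1}^2$ is uniformly bounded, and starting the $L^4$ Gronwall at $t_0$ via the embedding $H^1(\gw)\hookrightarrow L^4(\gw)$; this parabolic-smoothing route also makes the argument indifferent to whether $B$ is bounded in $G$ or only in $E$. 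Neither difference is a gap, but you should either patch the $t=0$ initialization by Galerkin approximation or adopt the paper's $t_0$-selection trick.
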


\begin{proof}
Sum up the $L^2$ inner-products of the $u_i$-equation with $u_i^3(t, x)$ and use Young's inequality \eqref{Yg} appropriately to treat the integral terms. We can get
\begin{equation}  \bl{uvq}
	\begin{split} 
	&\frac{1}{4} \frac{d}{dt} \sum_{i = 1}^m \|u_i(t)\|^{4}_{L^{4}} + 3 \eta \sum_{i = 1}^m \,\|u_i (t) \nb u_i (t)\|^2    \\
	= &\, \sum_{i = 1}^m \int_\gw \left(au_i^{5} - bu_i^{6} + u_i^{3} (v_i - w_i  +J_e) - k_1 (c + \ga \rho_i + \de \rho_i^2) u_i^4 \right) dx   \\
	&\, - \sum_{i = 1}^m\, \sum_{j = 1}^m P (u_i - u_j)^2 (u_i^2 + u_i u_j + u_j^2)  \\
	\leq &\,\sum_{i = 1}^m\, \int_\gw \left[ \left(C_3 + \frac{1}{4} bu_i^6\right) - b u_i^{6} + \left(\frac{1}{4} bu_i^6 + C_4 \left(v_i^{2} + w_i^{2} + J_e^2\right) \right) + k_1 \left(| c| + \frac{\ga^2}{\de}\right) u_i^4 \right] dx \\
	\leq &\,\sum_{i = 1}^m\, \int_\gw \left[ \left(C_3 + \frac{1}{4} bu_i^6\right) - b u_i^{6} + \left(\frac{1}{4} bu_i^6 + C_4 \left(v_i^{2} + w_i^{2} + J_e^2\right) \right) \right] dx \\
	 &\, + \sum_{i = 1}^m\, \int_\gw \left[\frac{1}{4} \,b u_i^6 + \frac{16 \,k_1^3}{b^2}  \left(| c| + \frac{\ga^2}{\de}\right)^3 \right] dx   \\
	 \leq &\, - \frac{1}{4} \sum_{i = 1}^m \int_\gw b u_i^6\, dx + C_4 \sum_{i = 1}^m \int_\gw (v_i^2 + w_i^2) dx  + m |\gw | \left[C_3 + C_4 J_e^2 + \frac{16\, k_1^3}{b^2} \left[ | c| + \frac{\ga^2}{\de}\right]^3 \right]
	\end{split}
\end{equation} 
where $C_3 (a, b)$ and $C_4 (b)$ are positive constants depending on $a, b$ and only on $b$, respectively. By the absorbing property shown in \eqref{dse}, for any given bounded set $B \subset G$, there is a finite time $T_B > 0$ such that
$$
	C_4 \sum_{i = 1}^m\, \int_\gw (v_i^2(t, x) + w_i^2 (t, x)) \, dx \leq C_4 K, \quad  \text{for} \;\,  t > T_B.
$$
Here $K$ is in \eqref{K}. Since $u^6 + 1 \geq u^4$, from \eqref{uvq} the above inequality implies that 
\beq \bl{u4}
	\frac{d}{dt}\sum_{i = 1}^m \|u_i(t)\|^{4}_{L^4} \, + \,b \sum_{i = 1}^m \int_\gw u_i^4\, dx \leq m |\gw | \left[b + C_3 + C_4 (K + J_e^2) + \frac{16\, k_1^3}{b^2}  \left[ | c| + \frac{\ga^2}{\de}\right]^3 \right].
\eeq
Apply the Gronwall inequality to \eqref{u4} and it yields that for $t \geq t_0 > T_B$,
\beq \bl{ub}
	\sum_{i = 1}^m \|u_i (t)\|_{L^4}^4 \leq e^{-bt} \sum_{i = 1}^m \|u_i (t_0)\|_{L^4}^4 + \frac{m}{b} | \gw | \left[b + C_3 + C_4 (K + J_e^2) + \frac{16\, k_1^3}{b^2}  \left[ | c| + \frac{\ga^2}{\de}\right]^3 \right]. 
\eeq

It remains to bound the $L^4$ norm of the initial state $u(t_0)$.  By Proposition \ref{pps}, for any weak solution of the memristive Hindmarsh-Rose evolutionary equation \eqref{pb}, the $u_i$-components have the regularity
$$
	u(t, \cdot) \in H^1 (\gw) \subset L^4 (\gw), \quad \text{for} \;\; t  > 0.
$$
One can integrate \eqref{ge} over the time interval $[T_B, T_B + 1]$ to get
\begin{equation} \bl{H1b}
	\begin{split}
	 &\sum_{i = 1}^m\, \int_{T_B}^{T_B + 1} C_1 \|u_i (s)\|^2_{H^1}\, ds   \\
	 \leq &\, \frac{1}{\min \{\eta, \gl \}} \sum_{i = 1}^m\, \int_{T_B}^{T_B + 1} C_1 (\eta \|\nb u_i (s)\|^2 + \gl \|u_i (s)\|^2)\, ds    \\[2pt]
	 \leq &\, \frac{1}{\min \{\eta, \gl \}} \max \{C_1, 1\} K +  M |\gw|.
	 \end{split}
\eeq
Hence for any given bounded set $B \subset G$ and any initial state $g^0 \in B$, there exists a time point $t_0 \in (T_B, T_B +1)$ such that 
\beq \bl{ut0}
	 \sum_{i = 1}^m\, \|u_i (t_0)\|^2_{L^4} \leq  \sum_{i = 1}^m\, \widehat{C} \|u_i (t_0)\|_{H^1}^2 \leq \frac{\widehat{C}}{C_1 \min \{\eta, \gl \}} \left(\max \{C_1, 1\}K + M |\gw| \right) 
\eeq
where $\widehat{C}$ is the embedding coefficient of $H^1 (\gw)$ into $L^4 (\gw)$.

Finally, combining the inequalities \eqref{ub} and \eqref{ut0}, we conclude that for any given bounded set $B \subset G$ and any initial state $g^0 \in B$, there exists a finite time 
$\mathcal{T}_B > T_B + 1$ such that the claimed inequality \eqref{Ku} is valid with the uniform bound
\beq \bl{G}
	\begin{split}
	D &= \left[\frac{\widehat{C}}{C_1 \min \{\eta, \gl \}} \left(\,\max \{C_1, 1\}K + M |\gw| \,\right) \right]^2   \\
	&\, + \frac{m}{b} | \gw | \left[b + C_{a,b} + C_4 (K + J_e^2) + \frac{16\, k_1^3}{b^2}  \left(| c| + \frac{\ga^2}{\de}\right)^3 \right].
	\end{split}
\eeq
Put together \eqref{Ku} and Theorem \ref{T2}. It is shown that there exists an absorbing set $\widehat{B}$ for the semiflow $S(t)_{t \geq 0}$ in the space $G$,
$$
	\widehat{B} = \{h = (h_u, h_v, h_w, h_\rho): |h_u|^4 + |h_v|^2 + |h_w|^2 + |h_\rho|^4 \leq K + D\}
$$
where $K$ is given in \eqref{K}. The proof is completed.
\end{proof}

\section{\textbf{Exponential Synchronization of the Neural Network}} 

\begin{definition}
For a model evolutionary equation of a neural network called NW such as \eqref{pb} formulated from the memristive and diffusive Hindmarsh-Rose equations \eqref{cHR}, we define the asynchronous degree of this neural network in a state space (as a Banach space) $Z$ to be
$$
	deg_s \,(\text{NW})= \sum_{1\, \leq i \,< j\, \leq \,m} \left\{ \sup_{g_i^0, \, g_j^0\,  \in \, Z} \, \left\{\limsup_{t \to \infty} \, \|g_i (t; g^0_i) - g_j (t; g^0_j)\|_Z \right\}\right\}
$$ 
where $g_i (t)$ and $g_j (t)$ are any two solutions of \eqref{cHR} with the initial states $g_i^0$ and $g_j^0$ for two neurons $\mathcal{N}_i$ and $\mathcal{N}_j$ in the network, respectively. The neural network is said to be asymptotically synchronized if 
$$
	deg_s \,(\text{NW}) = 0.
$$
If the asymptotic convergence to zero of the difference norm for any two neurons in a network admits a uniform exponential rate, then the neural network is called exponentially synchronized. 
\end{definition}

The following exponential synchronization theorem is the main result of this paper.

\begin{theorem} \bl{ES}
Exponential synchronization in the state space $E$ occurs for the memristive Hindmarsh-Rose neural network semiflow $\{S(t)\}_{t \geq 0}$ generated by the weak solutions of the evolutionary equation \eqref{pb}, if the network neuron-coupling strength coefficients $P$ and $Q$ satisfy the threshold conditions 
\beq \bl{Pcn}
	P >  \frac{1}{m} \left[\frac{4 a^2}{b} + \frac{8\beta^2}{b} \left(1 + \frac{1}{r} \right) + \frac{b}{16 \beta^2} \left( 1 + \frac{q^2}{r}\right) + k_1 \left(| c | + \frac{\ga^2}{4\de} \right) \right]
\eeq
and
\beq \bl{Qcn}
	Q \geq  \frac{1}{2m} \left[\left(1 + \frac{32 \beta^2 k_1^2 \ga^2}{b^2}\right) + \frac{K^2}{2\eta_2^3}\left(\frac{64 \beta^2 C^* k_1^2 \de^2}{b^2}\right)^4\right] ,
\eeq
where $K$ in \eqref{K} and $C^*(\gw)$ in \eqref{GN} are uniform constants independent of any initial states.
\end{theorem}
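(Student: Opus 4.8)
The plan is to pass to pairwise differences of the neuron states and to show that a suitably weighted total of these differences decays exponentially. Fix a pair $1 \leq i < j \leq m$ and set $\tilde{u} = u_i - u_j$, $\tilde{v} = v_i - v_j$, $\tilde{w} = w_i - w_j$ and $\tilde{\rho} = \rho_i - \rho_j$. Subtracting the two copies of \eqref{cHR}, the decisive structural point is that the fully-coupled linear terms collapse to a single damping: $\sum_{l=1}^{m} P(u_l - u_i) - \sum_{l=1}^{m} P(u_l - u_j) = -\,mP\,\tilde{u}$, and likewise the memristor coupling contributes $-\,mQ\,\tilde{\rho}$. First I would write out the four difference equations, recording that the $\tilde{u}$-equation inherits the dissipative damping $-mP\,\tilde{u}$ and the gradient term $\eta_1 \gd \tilde{u}$, while the $\tilde{\rho}$-equation inherits $-mQ\,\tilde{\rho}$, $-k_2\tilde{\rho}$ and $\eta_2\gd\tilde{\rho}$. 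A second structural point is that the cubic becomes sign-definite: $-b(u_i^3 - u_j^3) = -b\,\tilde{u}\,(u_i^2 + u_iu_j + u_j^2)$, with $u_i^2 + u_iu_j + u_j^2 \geq \tfrac{3}{4}(u_i + u_j)^2 \geq 0$, so testing against $\tilde{u}$ yields a genuine dissipation that will absorb several indefinite terms.

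Next I would form the energy inequality. Taking the $L^2$ inner products of the $\tilde{u}$-, $\tilde{v}$-, $\tilde{w}$- and $\tilde{\rho}$-equations against $\tilde{u}$, a weighted $\tilde{v}$, a weighted $\tilde{w}$ and $\tilde{\rho}$ respectively, and summing over all pairs, produces the time derivative of a weighted sum of $\|\tilde{u}\|^2, \|\tilde{v}\|^2, \|\tilde{w}\|^2, \|\tilde{\rho}\|^2$, together with the gradient dissipation $\eta_1\|\nb\tilde{u}\|^2 + \eta_2\|\nb\tilde{\rho}\|^2$ and the coupling dissipation $mP\|\tilde{u}\|^2 + mQ\|\tilde{\rho}\|^2$. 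The indefinite quadratic term $a(u_i^2 - u_j^2)\tilde{u} = a\,\tilde{u}^2(u_i + u_j)$ is absorbed into the cubic dissipation by Young's inequality \eqref{Yg} combined with $u_i^2 + u_iu_j + u_j^2 \geq \tfrac{3}{4}(u_i+u_j)^2$, at the cost of a term $\tfrac{4a^2}{b}\|\tilde{u}\|^2$; the feedback through $-\gb(u_i^2 - u_j^2) = -\gb(u_i+u_j)\tilde{u}$ in the $\tilde{v}$-equation and $q\tilde{u}$ in the $\tilde{w}$-equation is treated the same way, the cross terms being split so that their $\tilde{u}$-parts go into the cubic dissipation and their $\tilde{v}$-, $\tilde{w}$-parts are dominated by the $-\|\tilde{v}\|^2$ and $-r\|\tilde{w}\|^2$ damping. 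These three contributions account precisely for the first three summands in the threshold \eqref{Pcn}.

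The memristive nonlinearity is handled by the algebraic identity $\vp(\rho_i)u_i - \vp(\rho_j)u_j = \vp(\rho_i)\,\tilde{u} + u_j\,[\ga + \de(\rho_i + \rho_j)]\,\tilde{\rho}$. Testing the diagonal piece against $\tilde{u}$ gives $-k_1\int_\gw \vp(\rho_i)\tilde{u}^2\,dx$, which by completing the square ($\vp(\rho_i) \geq -|c| - \tfrac{\ga^2}{4\de}$) is bounded above by $k_1(|c| + \tfrac{\ga^2}{4\de})\|\tilde{u}\|^2$, the final summand of \eqref{Pcn}. The off-diagonal piece $-k_1\int_\gw u_j\,[\ga + \de(\rho_i+\rho_j)]\,\tilde{\rho}\,\tilde{u}\,dx$ is the genuine obstacle, since it couples $\tilde{u}$ to $\tilde{\rho}$ through the variable coefficients $u_j$ and $\rho_i + \rho_j$. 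Here I would again split by Young's inequality \eqref{Yg}: the $\ga$-part is elementary, but the $\de$-part carries the quartic product $u_j(\rho_i + \rho_j)\tilde{\rho}\,\tilde{u}$, which cannot be closed by Young alone. This is where the Gagliardo-Nirenberg interpolation inequality \eqref{GN} enters, estimating $\tilde{\rho}$ in an intermediate Lebesgue norm by $\|\nb\tilde{\rho}\|$ and a lower-order norm and using the uniform $L^4$ absorbing bound $K$ on $\rho_i, \rho_j$ from Theorems \ref{T2} and \ref{Tu}; the surviving $\|\nb\tilde{\rho}\|^2$ and $\|\tilde{\rho}\|^2$ are then absorbed by $\eta_2\|\nb\tilde{\rho}\|^2$ and $mQ\|\tilde{\rho}\|^2$. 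This is exactly the mechanism forcing the quantitative shape of \eqref{Qcn}, with $K$ and the interpolation constant $C^*(\gw)$.

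Finally, with the weights fixed so that every indefinite contribution has been absorbed, the thresholds \eqref{Pcn} and \eqref{Qcn} ensure that the assembled differential inequality takes the form $\tfrac{d}{dt}\Phi(t) \leq -\mu\,\Phi(t)$ for a uniform constant $\mu > 0$, where $\Phi$ denotes the weighted pairwise-difference energy summed over all pairs. Gronwall's inequality then yields $\Phi(t) \leq \Phi(0)\,e^{-\mu t}$, so that $\|g_i(t) - g_j(t)\|_E \to 0$ at the uniform exponential rate $\mu$ for every pair, which is precisely exponential synchronization in $E$ in the sense of the preceding definition. I expect the main difficulty to be the interpolation estimate of the quadratic memristive cross-term against the $Q$-coupling dissipation, as the exponents and constants there must be balanced exactly to produce a closed, sign-correct inequality under \eqref{Qcn}.
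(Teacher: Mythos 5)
Your proposal is correct and follows essentially the same route as the paper: pairwise differences with the coupling collapsing to $-mP\,\tilde u$ and $-mQ\,\tilde\rho$, absorption of the quadratic and feedback terms into the sign-definite cubic dissipation via Young's inequality (yielding the first three summands of \eqref{Pcn}), the same split of the memristive term into a diagonal piece controlled by completing the square (the last summand of \eqref{Pcn}) and an off-diagonal cross term whose $\de$-part is closed by H\"older, the $L^4$ absorbing bound $K$, and the Gagliardo--Nirenberg interpolation \eqref{GN} against the $\eta_2\|\nb\tilde\rho\|^2$ and $mQ\|\tilde\rho\|^2$ dissipation (yielding \eqref{Qcn}), followed by Gronwall. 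The only cosmetic differences are your lower bound $u_i^2+u_iu_j+u_j^2\geq\tfrac34(u_i+u_j)^2$ versus the paper's $\tfrac12(u_i^2+u_j^2)$ and the paper's explicit choice of weight $C=8\beta^2/b$ on the $\tilde u$-equation, neither of which changes the argument.
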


\begin{proof} 
Let the solutions of \eqref{cHR} in the space $L^2 (\gw, \mathbb{R}^4)$ for any two single neurons $\mathcal{N}_i$ and $\mathcal{N}_j$ in this neural network $\mathcal{NW}$ be denoted by 
\begin{equation*}
	\begin{split}
	g_i (t) = \text{col}\, (u_i (t), v_i (t), w_i (t), \rho_i (t)),   \quad   g_j (t) &= \text{col}\, (u_j (t), v_j (t), w_j (t), \rho_j (t)
	\end{split}
\end{equation*}  
with the initial states $g_i^0 = \text{col} \,(u_i^0, v_i^0, w_i^0, \rho_i^0)$ and $g_j^0 = \text{col} \,(u_j^0, v_j^0, w_j^0, \rho_j^0)$ respectively. Denote by $U(t) = u_i (t) - u_j (t),  V(t) = v_i (t) - v_j (t),  W(t) = w_i( t) - w_j (t),  R (t) = \rho_i (t) - \rho_j (t)$. Then $g_i (t) - g_j (t) = \text{col}\, (U(t), \,V(t), \,W(t), \,R(t))$.

Subtraction of the corresponding pairs of component equations for neurons $\mathcal{N}_i$ and $\mathcal{N}_j$ in the model system \eqref{cHR} gives us the following equations for $g_i (t) - g_j (t)$:
\beq \bl{dHR} 
	\begin{split}
		\frac{\pdr U}{\pdr t} & = \eta_1 \,\gd U +  a(u_i + u_j)\,U- b(u_i^2 + u_i u_j + u_j^2)\,U + V - W   \\
		& - k_1 (c + \ga \rho_i + \delta \rho_i^2)U - k_1 (\ga R + \de R(\rho_i + \rho_j)) u_j - m PU   \\
		& = \eta_1 \,\gd U +  a(u_i + u_j)\,U- b(u_i^2 + u_i u_j + u_j^2)\,U + V - W   \\
		& - k_1 (c + \ga \rho_j + \delta \rho_j^2)U - k_1 (\ga R + \de R(\rho_i + \rho_j)) u_i - m PU,  \\
		\frac{\pdr V}{\pdr t} & = - \beta (u_j +  u_k)\,U - V,    \\
		\frac{\pdr W}{\pdr t} & = q U - r W,   \\
		\frac{\pdr R}{\pdr t} & = \eta_2\,\gd R + U - k_2 R - m QR.
	\end{split}
\eeq
Here two decompositions of the difference of memristor coupling terms for $\mathcal{N}_i$ and $\mathcal{N}_j$ in the $U$-equation of \eqref{dHR} are equal. We want to conduct \emph{a priori} estimates of the weak solutions of the system \eqref{dHR}.  First take the $L^2$ inner-product of the $U$-equation with $CU(t)$, where the multiplier constant $C > 0$ will be chosen later. It gives us
\beq \bl{Uq} 
	\begin{split}
	&\frac{1}{2} \frac{d}{dt} (C \|U(t)\|^2) + C\eta_1 \|\nb U(t)\|^2 + C mP \|U(t)\|^2 \\[1pt]
	\leq &\, C \int_\gw \left(a\,(u_i + u_j)U^2 - b\,(u_i^2 + u_i u_j + u_j^2) U^2 + (V - W)U\right) dx \\
	- &\, Ck_1 \int_\gw \left(c + \frac{1}{2} \ga (\rho_i + \rho_j) + \frac{1}{2} \de \left(\rho_i^2 + \rho_j^2 \right)\right)U^2\, dx   \\
	- &\, Ck_1 \int_\gw \frac{1}{2} \left(\ga R + \de R(\rho_i + \rho_j) \right) (u_i + u_j) U\, dx.
	\end{split} 
\eeq
We treat the integral terms on the right-hand side of \eqref{Uq} as follow. By Young's inequality \eqref{Yg}, we have 
\beq \bl{vwU}
	\begin{split}
		&\int_\gw \left(C\, a (u_i + u_j)U^2 - C b\,(u_i^2 + u_i u_j + u_j^2) U^2 + C (V - W)U \right) dx \\
		\leq &\, \int_\gw \left[C a (u_i + u_j)U^2 - \frac{C b}{2}(u_i^2 + u_j^2) U^2 + C^2 \left(1 + \frac{1}{r} \right)U^2 + \frac{1}{4} V^2 + \frac{r}{4} W^2 \right] dx \\
		= &\, \int_\gw \left[C a (u_i + u_j)U^2 - \frac{C b}{2}(u_i^2 + u_j^2) U^2 \right] dx   \\
		&+ C^2 \left(1 + \frac{1}{r} \right) \|U(t)\|^2 + \frac{1}{4} \|V(t)\|^2 + \frac{r}{4} \|W(t)\|^2.
	\end{split}
\eeq
Also we have
\beq \bl{rU}
	\begin{split}
	& -Ck_1 \int_\gw \left(c + \frac{1}{2} \ga (\rho_i + \rho_j) + \frac{1}{2} \de (\rho_i^2 + \rho_j^2))\right)U^2\, dx   \\[3pt]
	\leq &\, Ck_1 \int_\gw \left[\left(| c | + \frac{\ga^2}{4\de}\right) U^2 + \frac{\de}{4}\, (\rho_i + \rho_j)^2 U^2 - \frac{\de}{2}\, (\rho_i^2 + \rho_j^2) U^2 \right] dx   \\[3pt]
	\leq &\, Ck_1 \left(| c | + \frac{\ga^2}{4\de}\right) \|U(t)\|^2
	\end{split}
\eeq
and by using the Young's inequality \eqref{Yg}, 
\beq \bl{RU}
	\begin{split} 
	& - Ck_1 \int_\gw \frac{1}{2} \left(\ga R + \de R(\rho_i + \rho_j) \right) (u_i + u_j) U\, dx \\[3pt]
	\leq &\, \frac{2Ck_1^2 \ga^2}{b} \|R(t)\|^2 + \int_\gw \frac{Cb}{16} (u_i^2 + u_j^2) U^2\,dx \\[3pt]
	+ &\, \frac{4Ck_1^2 \de^2}{b} \int_\gw R^2 (\rho_i^2+ \rho_j^2)\, dx + \int_\gw \frac{Cb}{16}(u_i^2 + u_j^2) U^2\, dx.
	\end{split} 
\eeq

Next for the other three component equations in \eqref{dHR}, we take the $L^2$ inner-products of the $V$-equation with $V(t)$, the  $W$-equation with $W(t)$, and the $R$-equation with $R(t)$ respectively. Then sum them up to get
\beq \bl{eG}
	\begin{split}
	&\frac{1}{2} \frac{d}{dt} \left(\|V \|^2 + \|W \|^2 + \|R \|^2 \right) + \|V \|^2 + r\, \|W \|^2 + \eta_2 \|\nb R\|^2 + (k_2 + mQ)\|R \|^2    \\[2pt]
	\leq &\, \int_\gw \left( - \beta (u_j +  u_k)UV + qUW + UR \right) dx \\ 
	\leq &\, \int_\gw 2 \beta^2 (u_1^2 +  u_2^2)U^2 dx + \frac{q^2}{2r}\|U\|^2 + \frac{1}{4} \|V\|^2 + \frac{r}{2} \|W\|^2 + \frac{1}{2} \left(\|U \|^2 + \|R\|^2\right).
	\end{split} 
\eeq

Now we choose the constant multiplier $C > 0$ to be 
\beq \bl{C}
	C = \frac{8 \beta^2}{b}.
\eeq
Then we see from \eqref{vwU}, \eqref{RU} and \eqref{eG} that 
$$
	\int_\gw \left(- \frac{Cb}{2} + \frac{Cb}{8} + 2\beta^2 \right) (u_i^2 + u_j^2)\, U^2\, dx = - \int_\gw \frac{Cb}{8} (u_i^2 + u_j^2)\, U^2\, dx.
$$
It will be used in the second step of the next differential inequality.

Finally assemble together the two major differential inequalities \eqref{Uq} amd \eqref{eG} with substitution of the term estimates \eqref{vwU}, \eqref{rU} and \eqref{RU}. We obtain 
\beq \bl{Mq} 
	\begin{split}
	&\frac{1}{2} \frac{d}{dt} \left(C \|U(t)\|^2 + \|V(t)\|^2 + \|W(t)\|^2 + \|R(t)\|^2 \right) + C\eta_1 \|\nb U\|^2 + \eta_2 \| \nb R\|^2   \\[5pt]
	&\, + C mP \|U(t)\|^2 + \|V(t)\|^2 + r\, \|W(t)\|^2 + (k_2 + mQ) \|R(t)\|^2   \\[4pt]
	\leq &\, \int_\gw \left[C a (u_i + u_j)U^2 - \frac{C b}{2}(u_i^2 + u_j^2) U^2 \right] dx   \\
	&+ C^2 \left(1 + \frac{1}{r} \right) \|U\|^2 + \frac{1}{4} \|V \|^2 + \frac{r}{4} \|W \|^2  + \int_\gw 2 \beta^2 (u_1^2 +  u_2^2)U^2 \, dx \\
	& + \frac{q^2}{2r}\|U\|^2 + \frac{1}{4} \|V\|^2 + \frac{r}{2} \|W\|^2 +  \frac{1}{2} (\|U(t) \|^2 + \|R(t)\|^2) + Ck_1 \left(| c | + \frac{\ga^2}{4\de}\right) \|U(t)\|^2    \\
	&+ \frac{2Ck_1^2 \ga^2}{b} \|R(t)\|^2 + \int_\gw \frac{Cb}{8} (u_i^2 + u_j^2) U^2\,dx + \frac{4Ck_1^2 \de^2}{b} \int_\gw R^2(t, x) (\rho_i^2 + \rho_j^2)\, dx   \\
	\leq &\, \int_\gw \left[C a (u_i + u_j)U^2 - \frac{C b}{8}(u_i^2 + u_j^2) U^2 \right] dx    \\
	&+ \frac{1}{2} \|V(t) \|^2 + \frac{3r}{4} \|W(t) \|^2  + \left(\frac{1}{2} + \frac{2Ck_1^2 \ga^2}{b}\right) \|R(t)\|^2   \\
	&+ \left[C^2 \left(1 + \frac{1}{r} \right) + \frac{q^2}{2r} + Ck_1 \left(| c | + \frac{\ga^2}{4\de}\right) + \frac{1}{2}\right] \|U(t)\|^2   \\
	&+ \frac{4Ck_1^2 \de^2}{b} \int_\gw R^2(t, x) (\rho_i^2(t, x) + \rho_j^2(t, x))\, dx   \\
	= &\, \int_\gw \left[\frac{4a^2}{b} - \left(\frac{\sqrt{2}a}{b^{1/2}} - \frac{b^{1/2}}{2\sqrt{2}}\, u_i\right)^2 - \left(\frac{\sqrt{2}a}{b^{1/2}} - \frac{b^{1/2}}{2\sqrt{2}}\, u_j \right)^2 \right] CU^2\, dx \\
	&+ \frac{1}{2} \|V(t) \|^2 + \frac{3r}{4} \|W(t) \|^2  + \left(\frac{1}{2} + \frac{2Ck_1^2 \ga^2}{b}\right) \|R(t)\|^2   \\
	&+ \left[C^2 \left(1 + \frac{1}{r} \right) + \frac{q^2}{2r} + Ck_1 \left(| c | + \frac{\ga^2}{4\de}\right) + \frac{1}{2}\right] \|U(t)\|^2   \\
	&+ \frac{4Ck_1^2 \de^2}{b} \int_\gw R^2(t, x) (\rho_i^2(t, x) + \rho_j^2(t, x) )\, dx   \\
	\leq &\, \left[\frac{4C a^2}{b} + C^2 \left(1 + \frac{1}{r} \right) + \frac{q^2}{2r} + Ck_1 \left(| c | + \frac{\ga^2}{4\de}\right) + \frac{1}{2}\right] \|U(t)\|^2   \\ 
	&+ \frac{1}{2} \|V(t) \|^2 + \frac{3r}{4} \|W(t) \|^2  + \left(\frac{1}{2} + \frac{2Ck_1^2 \ga^2}{b}\right) \|R(t)\|^2    \\
	&+ \frac{4Ck_1^2 \de^2}{b} \int_\gw R^2 (t, x) (\rho_i^2 (t, x) + \rho_j^2 (t, x))\, dx, \qquad t > 0. 
	\end{split} 
\eeq
By H\"{o}lder inequality and Theorem \ref{T2}, the integral term at the end of the above inequality \eqref{Mq} satisfies
\beq \bl{Rrho}
	\begin{split}
	&\int_\gw R^2 (t, x) (\rho_i^2 (t, x) + \rho_j^2 (t, x))\, dx   \\
	\leq \|R(t)\|^2_{L^4} &\left(\|\rho_i (t)\|^2_{L^4} + \|\rho_j (t)\|^2_{L^4} \right) \leq \sqrt{2K} \|R(t)\|^2_{L^4}, \quad t > \tau (g^0),
	\end{split}
\eeq
where $\tau (g^0) > 0$ is finite and depends on the initial state $g^0$ in \eqref{pb}, and the constant $K$ is given in \eqref{K}.

Therefore, from \eqref{Mq}, \eqref{Rrho} and Theorem \ref{Tu}, we have shown that there is a finite time $T(g^0) \geq \tau (g^0)$ such that 
\beq \bl{Snc} 
	\begin{split}
	&\frac{1}{2} \frac{d}{dt} \left[C \|U(t)\|^2 + \|V(t)\|^2 + \|W(t)\|^2 + \|R(t)\|^2 \right] + \eta_2 \|\nb R\|^2  \\[6pt]
	&\, + C mP \|U(t)\|^2 + \|V(t)\|^2 + r\, \|W(t)\|^2 + (k_2 + mQ) \|R(t)\|^2   \\[4pt]
	\leq &\, \left[\frac{4C a^2}{b} + C^2 \left(1 + \frac{1}{r} \right) + \frac{q^2}{2r} + Ck_1 \left(| c | + \frac{\ga^2}{4\de}\right) + \frac{1}{2} \right] \|U(t)\|^2   \\ 
	&+ \frac{1}{2} \|V(t) \|^2 + \frac{3r}{4} \|W(t) \|^2  + \left(\frac{1}{2} + \frac{2Ck_1^2 \ga^2}{b}\right) \|R(t)\|^2     \\
	&+ \frac{4Ck_1^2 \de^2}{b} (2K)^{1/2} \|R(t)\|^2_{L^4},   \qquad t > T(g^0). 
	\end{split} 
\eeq
It then yields that
\beq \bl{Syn} 
	\begin{split}
	&\frac{d}{dt} \left[C \|U(t)\|^2 + \|V(t)\|^2 + \|W(t)\|^2 + \|R(t)\|^2 \right] + 2 \eta_2 \|\nb R \|^2    \\[2pt]
	&+ 2C \left[mP - \left[\frac{4 a^2}{b} + C \left(1 + \frac{1}{r} \right) + \frac{1}{2C}\left(1 + \frac{q^2}{r}\right) + k_1 \left(| c | + \frac{\ga^2}{4\de}\right) \right]\right] \|U(t)\|^2   \\[2pt]
	&+ \|V(t)\|^2 + \frac{r}{2}\, \|W(t)\|^2 + 2k_2 \|R(t)\|^2 + 2 mQ \|R(t)\|^2    \\
	\leq &\,\left(1 + \frac{4Ck_1^2 \ga^2}{b}\right) \|R(t)\|^2 + \frac{8 Ck_1^2 \de^2}{b} (2K)^{1/2} \|R(t)\|^2_{L^4}, \quad t > T(g^0).
	\end{split} 
\eeq 
We deal with the last term in \eqref{Syn} by using the Gagliardo-Nirenberg interpolation inequalities \cite[Theorem B.3]{SY}. Since
$$
	H^1 (\gw) \subset L^4 (\gw) \subset L^2(\gw),
$$
one has
$$
	\|R(t)\|^2_{L^4} \leq C^* \|\nb R(t)\|^{2\theta} \|R(t)\|^{2(1 - \theta)}
$$
with a constant $C^*(\gw) > 0$ only depending on the domain $\gw$ and the interpolation index $\theta = 3/4$ given by (note that dim $\gw$ = 3)
$$
	- \frac{3}{4} = \theta \left(1 - \frac{3}{2}\right) - (1 - \theta)\, \frac{3}{2}\, .
$$
Hence the above inequality demonstrates  
\beq \bl{GN}
	\|R(t)\|^2_{L^4} \leq C^* \|\nb R(t)\|^{3/2} \|R(t)\|^{1/2} 
\eeq
and, by Young's inequality together with \eqref{C}, 
\beq \bl{L42}
	\begin{split}
	&\frac{8 Ck_1^2 \de^2}{b} K^{1/2} \|R(t)\|^2_{L^4} \leq  \|\nb R(t)\|^{3/2} \left[ \frac{8 CC^* \,k_1^2 \de^2}{b} (2K)^{1/2} \|R(t)\|^{1/2} \right]    \\
	\leq &\, 2\eta_2 \|\nb R(t)\|^{(3/2) \times (4/3)} + \frac{1}{8\eta_2^3}\left[\frac{8 CC^*\,k_1^2 \de^2}{b}\right]^4 (2K)^2 \|R(t)\|^2 \\
	= &\, 2\eta_2 \|\nb R(t)\|^2 + \frac{K^2}{2\eta_2^3}\left[\frac{64 \beta^2\, C^*\,k_1^2 \de^2}{b^2}\right]^4 \|R(t)\|^2. 
	\end{split}
\eeq 

Under the conditions \eqref{Pcn} and \eqref{Qcn} with the constant $C$ selection \eqref{C}, the network coupling coefficients $P > 0$ and $Q > 0$ satisfy 
\beq \bl{Pk}
	 \xi (P)= 2 \left[mP - \left[\frac{4 a^2}{b} + C \left(1 + \frac{1}{r} \right) + \frac{1}{2C}\left(1 + \frac{q^2}{r}\right) + k_1 \left(| c | + \frac{\ga^2}{4\de}\right) \right] \right]  > 0
\eeq
and 
\beq \bl{Qk} 
	2mQ \|R(t)\|^2 - \left[\left(1 + \frac{4Ck_1^2 \ga^2}{b}\right) + \frac{K^2}{2\eta_2^3}\left[\frac{64 \beta^2 C^* k_1^2 \de^2}{b^2}\right]^4\right] \|R(t)\|^2 \geq 0.   \\
\eeq
Substitute \eqref{L42}, \eqref{Pk} and \eqref{Qk} into the differential inequality \eqref{Syn}. We end up with 
\beq. \bl{Grw}
	\begin{split}
	&\frac{d}{dt} (C \|U(t)\|^2 + \|V(t)\|^2 + \|W(t)\|^2 + \|R(t)\|^2)    \\[5pt]
	&+ \,\kappa \left[ C\|U(t)\|^2 + \|V(t)\|^2 + \|W(t)\|^2 + \|R(t)\|^2 \right]   \\[3pt]
	\leq &\, \frac{d}{dt} (C \|U(t)\|^2 + \|V(t)\|^2 + \|W(t)\|^2 + \|R(t)\|^2)     \\[3pt]
	&+ C\xi \|U(t)\|^2 + \|V(t)\|^2 + \frac{r}{2}\, \|W(t)\|^2 + 2k_2 \|R(t)\|^2 \leq 0,  \quad t > T(g^0).
	\end{split}
\eeq 
in which the coefficient $\kappa$ is given by
\beq \bl{rate}
	\kappa (P) = \min \,\left\{ \xi (P), \,1,\, \frac{r}{2}, \,2k_2 \right\}.
\eeq
Gronwall inequality applied to \eqref{Grw} shows the exponential synchronization result: For any initial state $g^0 \in E$ and any two neurons $\mathcal{N}_i$ and $\mathcal{N}_j$ in the memristive Hindmarsh-Rose neural network $\mathcal{NW}$ modeled in \eqref{cHR}, their gap function $g_i(t; g_i^0) - g_j(t; g_j^0)$ converges to zero in the state space $E$ exponentially at a uniform rate $\kappa (P)$ shown by \eqref{rate} and \eqref{Pk}. Namely, for any $1 \leq i <  j \leq m$,
\beq \bl{ESyn} 
	\begin{split}
	\| g_i (t) - g_j (t) \|_E^2 &= \|U(t)\|^2 + \|V(t)\|^2 + \|W(t)\|^2 + \|R(t)\|^2   \\[5pt]
	&\leq e^{- \kappa \,t } \, \frac{\max \{1, \,C\}}{\min \{1, \,C\}} \, \|g_i^0 - g_j^0 \|^2  \\
	&= e^{- \kappa \,t } \, \frac{\max \{1, \,8\beta^2/b\}}{\min \{1, \,8\beta^2/b\}} \, \left\|g_i^0 - g_j^0 \right\|^2  \to 0, \;\; \text{as} \;\, t \to \infty. 
	\end{split}
\eeq
Hence it is proved that
$$
	deg_s (\mathcal{NW}) = \sum_{1 \,\leq \,i  \,<  \,j \,\leq \,m} \left\{\sup_{g^0\, \in \, E} \, \left\{\limsup_{t \to \infty} \|g_i (t) -g_j(t) \|^2_{L^2(\gw, \mathbb{R}^4)} \right\}\right\} = 0.
$$
Thus the exponential synchronization of this memristive and diffusive Hindmarsh-Rose neural network $\mathcal{NW}$ modeled by \eqref{cHR} in the space $E$ is proved under the threshold conditions \eqref{Pcn} and \eqref{Qcn}. The proof is completed.
\end{proof}

\textbf{Conclusions} \, In this paper, a new model of memristive and diffusive Hindmarsh-Rose neural networks is proposed. This new model features a hybrid system of two partial differential equations with two ordinary differential equations as well as the linear coupling terms in the membrane potential equations and the memristor equations for all neurons. 

Dissipative dynamics of the solution semiflow in the basic state space $E$ and the higher-order state space $G$ are shown through sophisticated grouping estimates and integral inequality leverage, especially for the highly nonlinear membrane potential PDE with the memductance coupling. It paves the way to explore synchronization of this memristive Hindmarsh-Rose neural network. 

The main result is Theorem \ref{ES}, which provides the explicit threshold conditions of the network coupling strengths $P$ and $Q$ to ensure an exponential synchronization of the neural network at a uniform convergence rate. The spirit of the entire proof is to tackle and control the higher nonlinearity in the memductance-potential effect by the linear network coupling in the integrable state spaces. Many steps of sharp analysis including Gagliardo-Nirenberg interpolation inequalities are carried out and cohesively grouped. The rigorous mathematical proof methodology in this work can be extended to study more complex neural networks described by hybrid differential equations or other types reaction-diffusion PDE models in a broad scope.

\bibliographystyle{amsplain}

\begin{thebibliography}{99}

\bibitem{Ay}
I.K. Aybar, \emph{Memristor-based oscillatory behavior in the FitzHugh-Nagumo and Hindmarsh-Rose models}, Nonlinear Dynamics, \textbf{103} (2021), 2917-2929.

\bibitem{BKG}
Y. Babacan, F. Kacar and K. Gurkan, \emph{A spiking and bursting neuron circuit based on memristor}, Neurocomputing, \textbf{203} (2016), 86-91.

\bibitem{BB} 
B. Bao, A. Hu, W. Liu, \emph{et al}, \emph{Hidden bursting firings and bifurcation mechanisms in memristive neuron model with threshold electromagnetic induction}, IEEE Trans. Neural Networks and Learning Systems, \textbf{31} (2020), 502-511.

\bibitem{BRS}
R.J. Buters, J. Rinzel and J.C. Smith, {\em Models respiratory rhythm generation in the pre-B\"{o}tzinger complex, I. Bursting pacemaker neurons}, J. Neurophysiology, \textbf{81} (1999), 382--397.

\bibitem{CV}
V.V. Chepyzhov and M.I. Vishik, {\em Attractors for Equations of Mathematical Physics},  AMS Colloquium Publications, Vol. \textbf{49}, AMS, Providence, RI, 2002.

\bibitem{Chua}
L. Chua, \emph{Memristor - the missing circuit element}, IEEE Trans. Circuit Theory, \textbf{18} (1971), 507.

\bibitem{ChuaK}
L. Chua and S.M. Kang, \emph{Memristive devices and systems}, Proceedings of the IEEE, \textbf{64}(2) (1976), 209-223.

\bibitem{CS}
L.N. Cornelisse, W.J. Scheenen, W.J. Koopman, E.W. Roubos and S.C. Gielen, {\em Minimal model for intracellular calcium oscillations and electrical bursting in melanotrope cells of Xenopus Laevis}, Neural Computations, \textbf{13} (2000), 113--137.

\bibitem{ET}
G.B. Ementrout and D.H. Terman, {\em Mathematical Foundations of Neurosciences}, Springer, 2010. 

\bibitem{EE}
A.S. Et\'{e}m\'{e} \emph{et al}, \emph{Chaos break and synchrony enrichment within Hindmarsh-Rose-type memristive neural models}, Nonlinear Dynamics, \textbf{105} (2021), 785-795.

\bibitem{FH}
R. FitzHugh, {\em Impulses and physiological states in theoretical models of nerve membrane}, Biophysical Journal, \textbf{1} (1961), 445--466.

\bibitem{Guan}
W. Guan, S. Yi and Y. Quan, \emph{Exponential synchronization of coupled memristive neural networks via pinning control}, Chinese Physics B, \textbf{22} (2013), 050504.

\bibitem{HR}
J.L. Hindmarsh and R.M. Rose, {\em A model of neuronal bursting using three coupled first-order differential equations}, Proceedings of the Royal Society London, Ser. B: Biological Sciences,  \textbf{221} (1984), 87--102.

\bibitem{HH}
A. Hodgkin and A. Huxley, {\em A quantitative description of membrane current and its application to conduction and excitation in nerve}, J. Physiology, Ser. B,  \textbf{117} (1952), 500--544.

\bibitem{HY}
M. Hui and J. Yan, \emph{Integral sliding mode exponential synchronization of inertial memristive neural networks with time varying delays}, Neural Processing Letters, accepted, (2022). https://doi.org/ 10.1007/s11063-022-10981-9.

\bibitem{IG}
G. Innocenti and R. Genesio, {\em On the dynamics of chaotic spiking-bursting transition in the Hindmarsh-Rose neuron}, Chaos, \textbf{19} (2009), 023124.

\bibitem{EI}
E.M. Izhikecich, {\em Dynamical Systems in Neuroscience: The Geometry of Excitability and Bursting}, MIT Press, Cambridge, Massachusetts, 2007.

\bibitem{MFL}
S.Q. Ma, Z. Feng and Q. Lu, {\em Dynamics and double Hopf bifurcations of the Rose-Hindmarsh model with time delay}, International Journal of Bifurcation and Chaos, \textbf{19} (2009), 3733--3751.

\bibitem{PYS}
C. Phan, Y. You and J. Su, \emph{Global dynamics of partly diffusive Hindmarsh-Rose equations in neurodynamics}, Dynamics of Partial Differential Equations, \textbf{18}(1) (2021), 33-47.

\bibitem{PY}
C. Phan and Y. You, \emph{A new model of coupled Hindmarsh-Rose neurons}, Journal of Nonlinear Modeling and Analysis, \textbf{1}(1) (2020), 1-16.

\bibitem{CPY}
C. Phan and Y. You, \emph{Synchronization of boundary coupled Hindmarsh-Rose neuron network}, Nonlinear Analysis: Real World Applications, \textbf{55} (2020), 103139.

\bibitem{PSY}
C. Phan, L. Skrzypek and Y. You, \emph{Dynamics and synchronization of complex neural networks with boundary coupling}, Analysis and Mathematical Physics, (2022), 12:33. http://doi. org/10.1007/s13324-021-00613-1.

\bibitem{RJ}
K. Rajagopal \emph{et al}, \emph{Effect of magnetic induction on the synchronizability of coupled neuron network},, Chaos, \textbf{31} (2021), 083115.

\bibitem{RM}
B. Ramakrishnan \emph{et al}, \emph{A new memristive neuron map model and its network's dynamics under electrochemical coupling}, Electronics, \textbf{11} (2022), 153. 

\bibitem{QW}
G. Qi and Z. Wang, \emph{Modeling and dynamics of double Hindmarsh-Rose neuron with memristor-based magnetic coupling and time delay}, Chinese Physics B, \textbf{30}(12) (2021), 120516.

\bibitem{SY} 
G.R. Sell and Y. You, {\em Dynamics of Evolutionary Equations}, Applied Mathematical Sciences, Volume \textbf{143}, Springer, New York, 2002.

\bibitem{SR}
P.P. Singh, A. Rai and B.K. Roy, \emph{Memristor-based asymmetric extreme multistate hyperchaotic system with a line of equilibria, coexisting attractors, its implementation and nonlinear active-adaptive projective synchronization}, European Physical Journal Plus, (2022), 137:875. https://doi.org/10.1140/epjp/s13360-022-03063-1.

\bibitem{SS}
D.B. Strukov \emph{et al}, \emph{The missing memristor found}, Nature, \textbf{453} (2008), 80.

\bibitem{Tr}
D. Terman, {\em Chaotic spikes arising from a model of bursting in excitable membrane}, J. Appl. Math., \textbf{51} (1991), 1418--1450.

\bibitem{US2}
K. Usha and P.A. Subha, \emph{Energy feedback and synchronous dynamics of Hindmarsh-Rose neuron model with memristor}, Chinese Physics B, \textbf{28}(2) (2019), 020502.

\bibitem{VK}
C.K. Volos \emph{et al}, \emph{Memristor: A new concept in synchronization of coupled neuromorphic circuits}, J. Eng. Sci. Tech. Review, \textbf{8} (2015), 157.

\bibitem{WP}
X. Wang, Ju H. Park, Z. Liu, and H. Yang, \emph{Dynamic event-triggered control for GSES of memristive neural networks under multiple cyber-attacks}, IEEE Transactions on Neural Networks and Learning Systems, accepted October 24, 2022.

\bibitem{WS}
Z.L. Wang and X.R. Shi, {\em Chaotic bursting lag synchronization of Hindmarsh-Rose system via a single controller}, Applied Mathematics and Computation, \textbf{215} (2009), 1091--1097.

\bibitem{Wu}
F. Wu, H. Gu and Y. Li, \emph{Inhibitory electromagnetic induction current induces enhancement instead of reduction of neural bursting activities}, Communications in Nonlinear Science and Numerical Simulation, \textbf{79} (2019), 104924.

\bibitem{XJ}
Y. Xu \emph{et al}, \emph{Synchronization between neurons coupled by memristor}, Chaos, Solitons and Fractals, \textbf{104} (2017), 435.

\bibitem{Y}
Y. You, \emph{Global dynamics of diffusive Hindmarsh-Rose equations with memristors}, Nonlinear Analysis: Real World Applications, \textbf{71} (2023), 103827.

\bibitem{Su}
F. Zhang, A. Lubbe, Q. Lu and J. Su, {\em On bursting solutions near chaotic regimes in a neuron model}, Discrete and Continuous Dynamical Systems, Ser. S,  \textbf{7} (2014), 1363--1383.
\end{thebibliography}

\end{document}